\newtheorem{theorem}{Theorem}
\newtheorem{algorithm}{Algorithm}
\newtheorem{corollary}{Corollary}
\newtheorem{example}{Example}
\newtheorem{lemma}{Lemma}
\newtheorem{remark}{Remark}
\numberwithin{equation}{section}
\title{Nonnegative Factorization and \\The Maximum Edge Biclique Problem} 
\author{Nicolas Gillis \and Fran\c{c}ois Glineur\thanks{Center for Operations Research and Econometrics, Universit\'e catholique de Louvain, Voie du Roman Pays, 34, B-1348 Louvain-La-Neuve, Belgium ;
nicolas.gillis@uclouvain.be and francois.\mbox{glineur}@uclouvain.be. The first author is a research fellow of the Fonds de la Recherche Scientifique (F.R.S.-FNRS). This text presents research results of the Belgian Program on Interuniversity Poles of Attraction initiated by the Belgian State, Prime Minister's Office, Science Policy Programming. The scientific responsibility is assumed by the authors.} }
\date{\small October 2008}
\begin{document}

\maketitle
\thispagestyle{empty}

\begin{abstract}
Nonnegative Matrix Factorization (NMF) is a data analysis technique which allows compression and interpretation of nonnegative data. NMF became widely studied
after the publication of the seminal paper by Lee and Seung (Learning the Parts of Objects by Nonnegative Matrix Factorization, Nature, 1999, vol. 401, pp. 788--791),
which introduced an algorithm based on Multiplicative Updates (MU). More recently, another class of methods called Hierarchical Alternating Least Squares (HALS) was introduced that seems to be much more efficient in practice.

In this paper, we consider the problem of approximating a not necessarily nonnegative matrix with the product of two nonnegative matrices, which we refer to as Nonnegative Factorization~(NF)~; this is the subproblem that HALS methods implicitly try to solve at each iteration. We prove that NF is NP-hard for any fixed factorization rank, using a reduction to the maximum edge biclique problem.

We also generalize the multiplicative updates to NF, which allows us to shed some light on the differences between the MU and HALS algorithms for NMF and give an explanation for the better performance of HALS. Finally, we link stationary points of NF with feasible solutions of the biclique problem to obtain a new type of biclique finding algorithm (based on MU) whose iterations have an algorithmic complexity proportional to the number of edges in the graph, and show that it performs better than comparable existing methods.
\bigskip

\noindent {\bf Keywords:} Nonnegative Matrix Factorization, Nonnegative Factorization, Complexity, Multiplicative Updates, Hierarchical Alternating Least Squares, Maximum Edge Biclique.
\end{abstract}

\newpage

\section{Introduction}

\emph{(Approximate) Nonnegative Matrix Factorization} (NMF) is the problem of approximating a given nonnegative matrix by the product of
two low-rank nonnegative matrices: given a matrix $M \geq 0$, one has to compute two low-rank matrices $V, W \geq 0$
such that
\begin{equation}
M \approx V W \;.
\label{approx}
\end{equation}
\noindent
This problem was first introduced in 1994 by Paatero and Tapper \cite{Paa}, and more recently received a considerable
interest after the publication of two papers by Lee and Seung \cite{LS1, LS2}. It is now well established that NMF is
useful in the framework of compression and interpretation of nonnegative data ; it has for example been applied in
analysis of image databases, text mining, interpretation of spectra, computational biology and many other applications (see e.g.~\cite{Ber, Dev, Dhi} and references therein).\\

\noindent How can one interpret the outcome of a NMF? Assume each column $M_{:j}$ of matrix $M$ represents an element
of a data set: expression (\ref{approx}) can be equivalently written as
\begin{equation}
M_{:j} \approx \sum_{k} V_{:k} W_{kj}, \quad \forall j
\label{inter}
\end{equation}
where each element $M_{:j}$ is decomposed into a nonnegative linear combination (with weights $W_{kj}$) of nonnegative
basis elements ($\{V_{:k}\}$, the columns of $V$). Nonnegativity of $V$ allows interpretation of the basis elements in
the same way as the original nonnegative elements in $M$, which is crucial in applications where the nonnegativity
property is a requirement (e.g. where elements are images described by pixel intensities or texts represented by
vectors of word counts). Moreover, nonnegativity of the weight matrix $W$ corresponds to an essentially additive
reconstruction which leads to a \emph{part-based representation}: basis elements will represent similar parts of the
columns of $M$. Sparsity is another important consideration: finding sparse factors improves compression and leads to
a better part-based representation of the data \cite{Hoy}.

We start this paper with a brief introduction to the NMF problem: Section~\ref{nmf} recalls existing complexity results, introduces two well-known classes of methods: multiplicative updates \cite{LS2} and hierarchical alternating least squares \cite{Cic} and proposes a simple modification to guarantee their convergence. The central problem studied in this paper, Nonnegative Factorization (NF), is a generalization of NMF where the matrix to be approximated with the product of two low-rank nonnegative matrices is not necessarily nonnegative. NF is introduced in Section~\ref{SecNF}, where it is shown to be NP-hard for any given factorization rank, using a reduction to the problem of finding a maximum edge biclique. Stationary points of the NF problem used in that reduction are also studied. This section ends with a generalization of the NMF multiplicative updates rules to the NF problem and a proof of their convergence. This allows us to shed new light on the standard multiplicative updates for NMF: a new interpretation is given in Section~\ref{interMU}, which explains the relatively poor performance of these methods and hints at possible improvements. Finally, Section~\ref{mbfa} introduces a new type of biclique finding algorithm that relies on the application of multiplicative updates to the equivalent NF problem considered earlier. This algorithm only requires a number of operations proportional to the number of edges of the graph per iteration, and is shown to perform well when compared to existing methods.

\section{Nonnegative Matrix Factorization (NMF)}
\label{nmf}

Given a matrix $M \in \mathbb{R}^{m \times n}_+$ and an integer $r \in \mathbb{N}_0$, the \emph{NMF optimization
problem} using the Frobenius norm is defined as
\begin{equation}\label{NMF} \tag{NMF}
\min_{V \in \mathbb{R}^{m \times r}, W \in \mathbb{R}^{r \times n}} ||M-VW||_F^2 \;\;
= \;\;  \sum_{i,j} (M - VW)_{ij}^2 \quad \text{such that } V, W \geq \mathbf{0}
\end{equation}
$\mathbb{R}^{m \times n}$ denotes the set of real matrices of dimension $m \times n$; $\mathbb{R}^{m \times n}_+$ the set of nonnegative matrices i.e. $\mathbb{R}^{m \times n}$ with every entry nonnegative, and $\mathbf{0}$ the zero matrix of appropriate dimensions.\\
A wide range of algorithms have been proposed to find approximate solutions for this problem
(see e.g.~\cite{Ber, Big, Cic, Cic1, Dhi2, Dhi, Gil, Lin}).
Most of them use the fact that although problem~\eqref{NMF} is not convex, its objective function is convex
separately in each of the two factors $V$ and $W$ (which implies that finding the optimal factor $V$ corresponding
to a fixed factor $W$ reduces to a convex optimization problem, and vice-versa), and try to find good approximate solutions by using alternating
minimization schemes. For instance, Nonnegative Least Squares (NNLS) algorithms
can be used to minimize (exactly) the cost function alternatively over factors $V$ and $W$ (see e.g.~\cite{Chen, Kim2}).

Actually, there exist other partitions of the variables that preserve convexity of the alternating minimization
subproblems: since the cost function can be rewritten as $||M- \sum_{i=1}^{r} V_{:i}W_{i:}||_F$, it is clearly convex
as long as variables do not include simultaneously an element of a column of $V$ and an element of the corresponding row of $W$ (i.e.\@ $V_{ki}$ and $W_{il}$ for the same index $i$).
Therefore, given a subset of indexes $K \subseteq R = \{1,2,\dots,r\}$, \eqref{NMF} is clearly convex for both the following subsets of variables
\[ P_K = \Big\{ V_{:i} \; \Big| \; i \in K \Big\} \; \cup \; \Big\{ W_{j:} \; \Big| \; j \in R\setminus K\; \Big\} \]
and its complement
\[ Q_K = \Big\{ V_{:i} \; \Big| \; i \in R\setminus K \Big\} \; \cup \; \Big\{ W_{j:} \; \Big|
\; j \in K\; \Big\} \;. \]

However, the convexity is lost as soon as one column of $V$ ($V_{:i}$) and the corresponding row of
$W$ ($W_{i:}$) are optimized simultaneously, so that the corresponding minimization subproblem can no longer be
efficiently solved up to global optimality.

\subsection{Complexity}
\label{complexityNMF}

Vavasis studies in \cite{Vav} the algorithmic complexity of the NMF optimization problem; more specifically, he proves
that the following problem, called \emph{Exact Nonnegative Matrix Factorization}\footnote{This is closely related to the \textit{nonnegative rank} of matrix $M$,
which is the minimum value of $r$ for which there exists $V \in \mathbb{R}^{m \times r}_+$ and $W \in \mathbb{R}^{r \times n}_+$ such that $M = VW$ (see \cite{Berman}).}, is NP-hard: 

\begin{quote} (Exact NMF) Given a nonnegative matrix $M \geq 0$ of rank $k$, find, if possible, two nonnegative factors $V\geq0$ and $W\geq0$ of rank $k$ such that $M = VW$. \end{quote} \vspace{0.1cm}

\noindent The NMF optimization problem is therefore also NP-hard, since when the rank $r$ is equal to the rank $k$ of the
matrix $M$, any optimal solution to the NMF optimization problem can be used to answer the Exact NMF problem
(the answer being positive if and only if the optimal objective value of the NMF optimization
problem is equal to zero).

The NP-hardness proof for exact NMF relies on its equivalence with a NP-hard problem in polyhedral combinatorics, and
requires both the dimensions of matrix $M$ and its rank $k$ to increase to obtain NP-hardness. In contrast, in the special
cases when rank $k$ is equal to $1$ or $2$, the exact NMF problem can always be answered in the affirmative:
\begin{enumerate}
\item When $k=1$, it is obvious that for any nonnegative rank-one matrix $M \geq 0$ there is nonnegative factors $v \geq 0$ and $w \geq 0$ such that $M = v w^T$.

    Moreover, the NMF optimization problem with $r=1$ can be solved in polynomial time: the Perron-Frobenius theorem implies
    that the dominant left and right singular vectors of a nonnegative matrix $M$ are nonnegative, while the
    Eckart-Young theorem states that the outer product of these dominant singular vectors is the best rank-one
    approximation of $M$; these vectors can be computed in polynomial-time using for example the singular value
    decomposition \cite{Gol}.
\item When nonnegative matrix $M$ has rank $2$, Thomas has shown \cite{Tho} that exact NMF
    is also always possible (see also~\cite{Coh}). The fact that any rank-two nonnegative matrix can be exactly factorized
    as the product of two rank-two nonnegative matrices can be explained geometrically as follows: viewing columns of $M$
    as points in $\mathbb{R}^m$, the fact that $M$ has rank $2$ implies that the set of its columns belongs to a two-dimensional
    subspace. Furthermore, because these columns are nonnegative, they belong to a two-dimensional pointed cone, see Figure~\ref{rank2}.
    Since such a cone is always spanned by two extremes vectors, this implies that all columns of $M$ can be represented exactly as nonnegative
    linear combinations of two nonnegative vectors, and therefore the exact NMF is always possible\footnote{The reason why this property no longer holds for higher values of the rank $k$ is that a $k$-dimensional cone is not necessarily spanned by a set of $k$ vectors when $k > 2$.}.
\begin{figure}[ht]
\begin{center}
\includegraphics[width=8cm]{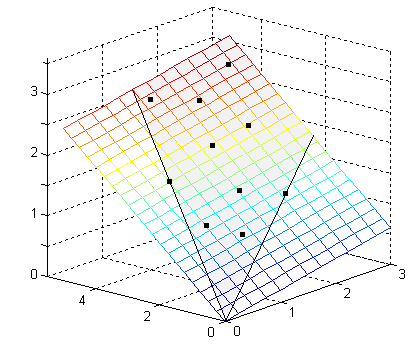}
\caption{Rank-two exact NMF ($k=2$): $m = 3$ and $n = 10$.}
\label{rank2}
\end{center}
\end{figure}

    Moreover, these two extreme columns can easily be computed in polynomial time (using for example the fact that they define an angle of maximum amplitude among all pairs of columns). Hence, when the optimal rank-two approximation of matrix $M$ is nonnegative, the NMF optimization problem
    with $r=2$ can be solved in polynomial time. However, this optimal rank-two approximation is not always nonnegative, 
    so that the complexity of the NMF optimization in the case $r=2$ is not known. Furthermore, to the best of our knowledge, the complexity
    of the exact NMF problem and the NMF optimization problem are still unknown for any fixed rank $r$ or $k$ greater than~$3$.

\end{enumerate}

\subsection{Multiplicative Updates (MU)}
\label{LSalgo}

In their seminal paper \cite{LS2}, Lee and Seung propose multiplicative update rules that aim at minimizing the Frobenius norm between $M$ and $VW$.
To understand the origin of these rules, consider the Karush-Kuhn-Tucker first-order optimality conditions for \eqref{NMF}
\begin{eqnarray}
V \geq \textbf{0},&\quad& W \geq \textbf{0} \label{Vpos} \\
\nabla_V ||M-VW||_F^2 \geq \textbf{0}, &\quad& \nabla_W ||M-VW||_F^2 \geq \textbf{0} \label{Gpos} \\
V \circ \nabla_V ||M-VW||_F^2 = \textbf{0} ,&\quad& W \circ \nabla_W ||M-VW||_F^2 = \textbf{0} \label{Mix}
\end{eqnarray}
where $\circ$ is the Hadamard (component-wise) product between two matrices, and
\begin{equation}
\nabla_V  ||M-VW||_F^2 = -2 (M-VW)W^T,\quad \nabla_W ||M-VW||_F^2 = -2 V^T(M-VW) \;.
\label{GradF}
\end{equation}
Injecting (\ref{GradF}) in (\ref{Mix}), we obtain
\begin{eqnarray}
V \circ (VWW^T) & = & V \circ (MW^T) \label{MixV} \label{LSorigin1}\\
W \circ (V^TVW) & = & W \circ (V^TM) \label{MixW} \;. \label{LSorigin2}
\end{eqnarray}
From these equalities, Lee and Seung derive the following simple multiplicative update rules
(where $\frac{[.]}{[.]}$ is Hadamard (component-wise) division)
\begin{equation}
V \leftarrow V \circ \frac{[M W^T]}{[V W W^T]}, \qquad  W \leftarrow W \circ \frac{[V^T M]}{[V^T V W]}\,
\label{LSupdate}
\end{equation}
for which they are able to prove a monotonicity property:
\begin{theorem}[\cite{LS2}] The Frobenius norm $||M-VW||_F$ is nonincreasing under the multiplicative update rules~\eqref{LSupdate}.
\end{theorem}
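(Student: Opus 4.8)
The plan is to use the classical \emph{auxiliary function} technique. Recall that $G(x,y)$ is an auxiliary function for $F(x)$ if $G(x,y) \geq F(x)$ for all $x,y$ and $G(x,x) = F(x)$; if one sets $x^{t+1} = \argmin_x G(x,x^t)$, then the chain $F(x^{t+1}) \leq G(x^{t+1},x^t) \leq G(x^t,x^t) = F(x^t)$ shows $F$ is nonincreasing along the iterates. It therefore suffices, for each of the two updates in \eqref{LSupdate}, to produce an auxiliary function whose minimizer coincides with that update.

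First I would reduce the problem. Since $||M-VW||_F = ||M^T - W^TV^T||_F$, the update of $V$ is the transpose of the update of $W$, so it is enough to treat the $W$-update with $V \geq 0$ fixed. Moreover $||M-VW||_F^2 = \sum_j ||M_{:j} - VW_{:j}||_2^2$ decouples over the columns of $W$, so it suffices to work with the per-column objective $F(h) = ||v - Vh||_2^2$ over $h \geq 0$, where $v = M_{:j}$; here $\nabla F(h) = 2V^T(Vh-v)$ and $\nabla^2 F = 2Q$ with $Q := V^TV$. For a current iterate $h^t > 0$ I would define the diagonal matrix $K(h^t)$ with entries $K(h^t)_{aa} = (Q h^t)_a / h^t_a$ and the quadratic model
\[ G(h,h^t) \;=\; F(h^t) + (h-h^t)^T \nabla F(h^t) + (h-h^t)^T K(h^t)\,(h-h^t) \;. \]
Then $G(h^t,h^t) = F(h^t)$, and solving $\nabla_h G = \nabla F(h^t) + 2K(h^t)(h-h^t) = 0$ gives the unique minimizer $h^{t+1}_a = h^t_a - \frac{h^t_a}{(Qh^t)_a}(Qh^t - V^Tv)_a = h^t_a \, (V^Tv)_a / (Qh^t)_a$, which is $\geq 0$ because $h^t>0$ and $V,v \geq 0$; reassembling over the columns $j$, this is exactly $W \leftarrow W \circ [V^TM]/[V^TVW]$.

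The crux — and the step I expect to be the main obstacle — is verifying $G(h,h^t) \geq F(h)$. Since $F$ is quadratic, its exact second-order remainder at $h^t$ is $(h-h^t)^TQ(h-h^t)$, so $G(h,h^t) - F(h) = (h-h^t)^T\big(K(h^t) - Q\big)(h-h^t)$, and the inequality reduces to the matrix inequality $K(h^t) \succeq Q$. Here I would use that $V \geq 0$ implies $Q$ is not merely positive semidefinite but also entrywise nonnegative. Setting $u_a = \xi_a/h^t_a$ for an arbitrary vector $\xi$, one computes $\xi^T K(h^t)\xi = \sum_{a,b} Q_{ab}\,h^t_b\,\xi_a^2/h^t_a = \sum_{a,b} Q_{ab}\,h^t_a h^t_b\,u_a^2$, and hence, symmetrizing over $a \leftrightarrow b$,
\[ \xi^T\big(K(h^t) - Q\big)\xi \;=\; \sum_{a,b} Q_{ab}\,h^t_a h^t_b\,(u_a^2 - u_a u_b) \;=\; \tfrac12 \sum_{a,b} Q_{ab}\,h^t_a h^t_b\,(u_a - u_b)^2 \;\geq\; 0, \]
the last inequality using $Q_{ab} \geq 0$ and $h^t > 0$. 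This weighted sum-of-squares identity is the only place where nonnegativity of $V$ is needed.

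Combining the pieces: $F(h^{t+1}) \leq G(h^{t+1},h^t) \leq G(h^t,h^t) = F(h^t)$, so each per-column $W$-update does not increase $F$; summing over columns, the $W$-update does not increase $||M-VW||_F^2$; by the transpose symmetry the same holds for the $V$-update; hence $||M-VW||_F$ is nonincreasing under \eqref{LSupdate}. I would finish with a brief remark on the degenerate situations in which some entry of $h^t$ or some denominator $(Qh^t)_a$ vanishes, which are excluded (or handled by a standard continuity/limiting argument) — a technical caveat rather than a real difficulty.
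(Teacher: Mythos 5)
Your proposal is correct and follows essentially the same route the paper takes: the paper states this theorem as a citation of Lee and Seung, but its proof of the more general Theorem~\ref{NFmultTh} (which reduces to this statement when $N=\mathbf{0}$, $P=M$) uses exactly your majorization scheme --- split the objective into independent rows/columns, replace the Hessian $Q$ by the diagonal matrix with entries $(Qh^t)_a/h^t_a$, check that the resulting quadratic model is an upper bound and that its minimizer is the multiplicative update. The only difference is that where the paper cites \cite{LS2} for positive semidefiniteness of $K(h^t)-Q$, you prove it directly via the weighted sum-of-squares identity, which is a welcome self-contained addition rather than a deviation.
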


\noindent The algorithm based on the alternated application of these rules is not guaranteed to converge to a first-order stationary point (see e.g.~\cite{Ber}, and references therein), but a slight modification proposed in~\cite{Lin2} achieves this property (roughly speaking, MU is recast as a variable metric steepest descent method and the step length is modified accordingly). We propose another possibility to overcome this problem by replacing the above updates by the following:
\begin{theorem} \label{epsMU} For every constant $\epsilon > 0$, $||M-VW||_F$ is nonincreasing under
\begin{equation}
V \leftarrow \max\Big(\epsilon,V \circ \frac{[M W^T]}{[V W W^T]}\Big), \qquad
W \leftarrow \max\Big(\epsilon,W \circ \frac{[V^T M]}{[V^T V W]} \Big)\,
\label{LSupdateEps}
\end{equation}
for any $(V, W) \geq \epsilon$. Moreover, every limit point of this algorithm is a stationary point of the following optimization problem
\begin{equation} \label{epsNMF}
\min_{V\geq \epsilon, W\geq \epsilon} ||M-VW||_F^2.
\end{equation}
\end{theorem}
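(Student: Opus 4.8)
The plan is to graft the classical auxiliary-function (majorization--minimization) argument of Lee and Seung onto a fixed-point analysis; the essential new observation will be that the lower bound $\epsilon>0$ simultaneously keeps all denominators appearing in \eqref{LSupdateEps} bounded away from zero and makes the auxiliary minimization subproblems strictly feasible on a box. For the monotonicity part, I would first recall that, for fixed $V$, Lee and Seung construct a separable quadratic auxiliary function $G(W,W^t)$ with $G(W,W^t)\geq ||M-VW||_F^2$ for all $W$, with $G(W^t,W^t)=||M-VW^t||_F^2$, and with diagonal Hessian whose $a$-th entry is proportional to $(V^TVW^t)_a/W^t_a$; this construction requires $V\geq\textbf{0}$ and $W^t>\textbf{0}$, both guaranteed here by $(V,W)\geq\epsilon$. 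Since $G(\cdot,W^t)$ is separable and strictly convex, its minimizer over the box $\{W\geq\epsilon\}$ is obtained coordinatewise by clipping the unconstrained minimizer at $\epsilon$, and that unconstrained minimizer is precisely the standard Lee--Seung iterate $W\circ\frac{[V^TM]}{[V^TVW^t]}$; hence the truncated rule \eqref{LSupdateEps} returns exactly $W^{t+1}=\argmin_{W\geq\epsilon}G(W,W^t)$. As $W^t$ itself lies in the box, I get $||M-VW^{t+1}||_F^2\leq G(W^{t+1},W^t)\leq G(W^t,W^t)=||M-VW^t||_F^2$, which is monotonicity for the $W$-update; the $V$-update is symmetric, and composing the two half-steps shows $||M-VW||_F$ is nonincreasing.

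For the limit-point part, I would let $\mathcal{A}$ denote the map performing one $V$-update followed by one $W$-update, write $F(V,W)=||M-VW||_F^2$, and note that on $\mathcal{S}=\{V\geq\epsilon,\ W\geq\epsilon\}$ every denominator in \eqref{LSupdateEps} is bounded below by a positive constant depending only on $\epsilon$ and the matrix dimensions, so $\mathcal{A}$ is continuous on $\mathcal{S}$ and maps $\mathcal{S}$ into itself. By monotonicity, $F(V^t,W^t)$ is nonincreasing and bounded below by $0$, hence converges to some $F^\star$, and so does every subsequence, in particular the one indexed by $t_k+1$. Given a limit point $(V^\star,W^\star)\in\mathcal{S}$ of a subsequence $(V^{t_k},W^{t_k})$, continuity of $F$ and of $F\circ\mathcal{A}$ yields $F(V^\star,W^\star)=F^\star$ and $F(\mathcal{A}(V^\star,W^\star))=\lim_k F(V^{t_k+1},W^{t_k+1})=F^\star$, so the full step leaves $F$ unchanged at $(V^\star,W^\star)$. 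Retracing the inequalities of the monotonicity step with $(V',W')=\mathcal{A}(V^\star,W^\star)$, both half-steps must leave $F$ unchanged; equality in the auxiliary bound, $G(V',V^\star)=G(V^\star,V^\star)$, then says that $V^\star$ also attains $\min_{V\geq\epsilon}G(V,V^\star)$, and strict convexity of $G(\cdot,V^\star)$ on the box forces $V'=V^\star$, whence likewise $W'=W^\star$; thus $(V^\star,W^\star)$ is a fixed point of $\mathcal{A}$. Finally, $V^\star=\argmin_{V\geq\epsilon}G(V,V^\star)$ satisfies the KKT conditions of this box-constrained strictly convex program, and since the auxiliary function is first-order tangent to $F$ (its gradient at the base point equals $\nabla_V F(V^\star,W^\star)$), these KKT conditions coincide with the stationarity conditions of \eqref{epsNMF} in the $V$-block; the same argument in the $W$-block then shows $(V^\star,W^\star)$ is a stationary point of \eqref{epsNMF}.

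I expect the two delicate points to be: (i) verifying that the truncated rule is the \emph{exact} minimizer of Lee and Seung's auxiliary function over the box $\{\cdot\geq\epsilon\}$, which rests on that function being separable with diagonal Hessian; and (ii) the implication ``$F$ unchanged after one step $\Rightarrow$ fixed point $\Rightarrow$ stationary point of \eqref{epsNMF}'', which genuinely uses both strict convexity of $G$ restricted to the box (for uniqueness of the constrained minimizer) and the first-order tangency of $G$ to $F$. It is exactly here that the $\epsilon$-truncation earns its keep: without it the plain updates \eqref{LSupdate} can drive iterates to the boundary, where the denominators may vanish, $\mathcal{A}$ ceases to be continuous, and the limit-point argument collapses --- which is why \eqref{LSupdate} is not known to converge to stationary points.
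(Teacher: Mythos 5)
Your proof is correct and follows essentially the same route as the paper's: the paper establishes the more general Theorem~\ref{NFmultEps} (multiplicative updates for NF with $M=P-N$) by exactly this majorization argument --- the separable, diagonal-Hessian auxiliary function whose minimizer over the box $\{\cdot\geq\epsilon\}$ is the $\epsilon$-clipped Lee--Seung iterate --- and then obtains Theorem~\ref{epsMU} as the special case $N=\mathbf{0}$. Your limit-point argument (continuity of the update map plus strict convexity of the auxiliary function forcing limit points to be fixed points, whose KKT conditions transfer to $F$ by first-order tangency) spells out in detail a step the paper's proof states rather tersely, but the underlying idea is the same.
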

\begin{proof}
See Section~\ref{MUNF} of this paper, where the more general Theorem~\ref{NFmultEps} is proved.
\end{proof} \vspace{0.1cm}

\subsection{Hierarchical Alternating Least Squares (HALS)} \label{secHALS}
Cichoki et al. \cite{Cic} and independently several other authors \cite{Gil1, ho} have proposed to solve the problem of Nonnegative Matrix Factorization by considering successively each rank-one factor $V_{:k}W_{k:}$ while keeping the rest of the variables fixed, which can be expressed as
\begin{equation}
M \approx V_{:k}W_{k:} + \sum_{i\neq k}V_{:i}W_{i:} \quad \Leftrightarrow \quad V_{:k}W_{k:} \approx M - \sum_{i\neq k}V_{:i}W_{i:} \quad \text{ or } \quad V_{:k}W_{k:} \approx R_k
\end{equation}
where matrix $R_k$ is called the $k^\text{th}$ residual matrix.

Ideally, one would like to find an optimal rank-one factor $V_{:k}W_{k:}$ according to the Frobenius norm, i.e.\@ solve the following problem
\begin{equation}
\min_{V_{:k} \in \mathbb{R}^{m}, W_{k:} \in \mathbb{R}^{n}} ||M-VW||_F^2 = ||R_k - V_{:k} W_{k:} ||_F^2 \quad \text{such that } V_{:k}, W_{k:} \geq \mathbf{0}
\label{symmetry}
\end{equation}
but, instead of solving this problem directly, these authors propose to optimize column $V_{:k}$ and row $W_{k:}$ separately in an alternating scheme, because
the optimal solution to these two (convex) subproblems can be easily computed in closed form, see e.g.~\cite{diep}:
\begin{eqnarray}
V_{:k}^* & = \;\; \textrm{argmin}_{V_{:k} \geq 0} ||R_k - V_{:k} W_{k:} ||_F^2 & = \;\; \max\bigg( \mathbf{0}, \frac{R_k W_{k:}^T}{||W_{k:}||_2^2}\bigg)
\label{gilVcol1} \\
W_{k:}^* & =  \;\; \textrm{argmin}_{W_{k:} \geq 0} ||R_k - V_{:k} W_{k:} ||_F^2 & =  \;\; \max\bigg( \mathbf{0}, \frac{V_{:k}^T R_k}{||V_{:k}||_2^2}\bigg)\; .
\label{gilVcol}
\end{eqnarray}
This scheme, which amounts to a block coordinate descent method (for which any cyclic order on the columns of $V$ and the rows of $W$ is admissible), is called \textit{Hierarchical Alternating Least Squares} (HALS)\footnote{In \cite{diep, ho}, it is called Rank-one Residue Iteration (RRI) method and in \cite{Gil1} Alternating NMF (ANMF).} and it has been observed to work remarkably well in practice: it outperforms, in most cases, the other algorithms for NMF \cite{Cic1, Gil1, diep}. Indeed, it combines a low computational cost per iteration (the same as the multiplicative updates) with a relatively fast convergence (significantly faster than the multiplicative updates), see Figure~\ref{cbcl} for an example.
 We will explain later in Section~\ref{interMU} why this algorithm performs much better than the one of Lee and Seung.

A potential issue with this method is that, in the course of the optimization process, one of the vectors $V_{:k}$ (or $W_{k:}$) and the corresponding
rank-one factor $V_{:k} W_{k:}$ may become equal to zero (this happens for example if one of the residuals $R_k$ is nonpositive). This then leads to numerical instabilities (the next update is not well-defined) and a rank-deficient approximation (with a rank lower than $r$).
A possible way to overcome this problem is to replace the zero lower bounds on $V_{:k}$ and $W_{k:}$ in \eqref{gilVcol1} and \eqref{gilVcol} by a small positive constant, say $\epsilon \lll 1$ (as for the MU), and consider the following subproblems
\begin{equation}
V_{:k}^* = \textrm{argmin}_{V_{:k} \geq \mathbf{\epsilon}} ||R_k - V_{:k} W_{k:} ||_F^2
\text{ and } W_{k:}^*  = \textrm{argmin}_{W_{k:} \geq \mathbf{\epsilon}} ||R_k - V_{:k} W_{k:} ||_F^2, \label{modprobs}
\end{equation}
which lead to the modified closed-form update rules:
\begin{equation}
V_{:k}^* = \max\bigg( \mathbf{\epsilon}, \frac{R_k W^T}{||W_{k:}||_2^2}\bigg)
\quad\text{ and }\quad W_{k:}^*  =  \max\bigg( \mathbf{\epsilon}, \frac{V^T R_k}{||V_{:k}||_2^2}\bigg)\;. \label{modrules}
\end{equation}
This idea was already suggested in \cite{Cic} in order to avoid numerical instabilities.
In fact, this variant of the algorithm is now well-defined in all situations because \eqref{modprobs} guarantees $V_{:k} > 0$ and $W_{k:} > 0$ at each iteration.
Furthermore, one can now easily prove that it converges to a stationary point.
\begin{theorem}
\label{cdTh}
For every constant $\epsilon > 0$, the limit points of the block coordinate descent algorithm initialized with positive matrices and applied to the optimization problem \eqref{epsNMF} are stationary points.
\end{theorem}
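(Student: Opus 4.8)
The plan is to identify the modified HALS recursion with a textbook block coordinate descent (Gauss--Seidel) method and invoke a classical convergence theorem for such methods (e.g.\ Bertsekas, \emph{Nonlinear Programming}, Prop.~2.7.1). Partition the variables of \eqref{epsNMF} into the $2r$ blocks $V_{:1},\dots,V_{:r},W_{1:},\dots,W_{r:}$. The feasible set $\{(V,W)\,:\,V\geq\epsilon,\ W\geq\epsilon\}$ is then exactly the Cartesian product of the closed convex boxes $[\epsilon,+\infty)^m$ (one per column of $V$) and $[\epsilon,+\infty)^n$ (one per row of $W$), and the algorithm cyclically replaces one block by the exact minimizer of $f(V,W)=\|M-VW\|_F^2$ over that block with the others held fixed. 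Since $f$ is a polynomial it is continuously differentiable, so the only hypothesis of the cited theorem needing verification is that each single-block subproblem admits a \emph{unique} minimizer.

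First I would establish this uniqueness, which is exactly where the strictly positive lower bound $\epsilon$ is essential. Freezing all blocks but $V_{:k}$, the objective becomes $\|R_k - V_{:k}W_{k:}\|_F^2$, which separates over the entries of $V_{:k}$ into one-dimensional convex quadratics, each with leading coefficient $\|W_{k:}\|_2^2$. Since $W_{k:}\geq\epsilon>0$ we get $\|W_{k:}\|_2^2>0$, so each such quadratic is strictly convex, its minimizer over $[\epsilon,+\infty)$ is unique, and the vector of these minimizers is precisely the first closed-form rule in \eqref{modrules}; the symmetric argument, using $V_{:k}\geq\epsilon>0$, handles the $W_{k:}$ subproblem via the second rule in \eqref{modrules}. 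Note also that every iterate stays feasible (in fact $\geq\epsilon$ componentwise), so all subproblems are well defined at each step, and the objective values form a non-increasing sequence bounded below by $0$.

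With the three hypotheses in hand --- feasible set a Cartesian product of closed convex sets, $C^1$ objective, unique per-block minimizers --- the classical result gives that any limit point $(V^*,W^*)$ of the generated sequence is a coordinate-wise minimizer of $f$ over \eqref{epsNMF}. Because the feasible set is a Cartesian product of convex sets, the cone of feasible directions at $(V^*,W^*)$ is the product of the per-block feasible direction cones, so coordinate-wise optimality forces $\langle\nabla f(V^*,W^*),d\rangle\geq 0$ for every feasible direction $d$; that is, $(V^*,W^*)$ is a stationary point of \eqref{epsNMF}, as claimed.

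The main obstacle is not conceptual depth but careful bookkeeping: one must check that the updates \eqref{modrules} genuinely coincide with the \emph{exact} constrained block minimizers (so the scheme really is Gauss--Seidel, not merely a monotone descent iteration) and that the $\epsilon$ bound keeps every block subproblem strictly convex along the whole run --- without it, a vanishing row $W_{k:}$ or column $V_{:k}$ would make the corresponding subproblem only convex, its minimizer would cease to be unique, and the cited theorem would no longer apply. This is precisely the numerical instability flagged just before the statement, and it is why the modification to \eqref{epsNMF} is exactly what makes the convergence proof go through.
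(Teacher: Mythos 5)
Your proposal is correct and follows essentially the same route as the paper: both invoke the classical block coordinate descent convergence result (Powell/Bertsekas), verify that each block lives in a closed convex set $[\epsilon,+\infty)^m$ or $[\epsilon,+\infty)^n$, and establish uniqueness of the per-block minimizers via strict convexity of the separable quadratic subproblems, which is guaranteed precisely because the $\epsilon$ lower bound keeps $\|W_{k:}\|_2^2$ and $\|V_{:k}\|_2^2$ strictly positive. Your additional remarks on feasibility of the iterates and on the identification of \eqref{modrules} with the exact constrained minimizers are sound elaborations of the same argument.
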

\begin{proof}
We use the following result of Powell \cite{Pow} (see also~\cite[p.268]{Bert}): the limit points of the iterates of a block coordinate descent algorithm are stationary points provided that the following two conditions hold:
\begin{itemize}
\item each block of variables is required to belong to a closed convex set,
\item the minimum computed at each iteration for a given block of variables is uniquely attained.
\end{itemize}
The first condition is clearly satisfied here, since $V_{:k}$ and $W_{k:}$ belong respectively to $([\epsilon,+\infty[)^m$ and $([\epsilon,+\infty[)^n$, 
which are closed convex sets. The second condition holds because subproblems~\eqref{modprobs} can be shown to be strictly convex, so that their optimal value is uniquely attained by the solutions provided by rules~\eqref{modrules}. Strict convexity is due to the fact that the objective function of these problems
are sums of quadratic terms, each involving a single variable and having a strictly positive coefficient.
\end{proof}
\vspace{0.2cm}

\section{Nonnegative Factorization (NF)}\label{SecNF}

Looking back at subproblem~\eqref{symmetry}, i.e.\@ approximating the residual $R_k$ with a rank-one term $V_{:k} W_{k:}$, we have seen that the optimal solution separately for both $V_{:k}$ and $W_{k:}$ can be written in a closed form. In the previous section, subproblem~\eqref{symmetry} was then solved by a block coordinate descent algorithm.

A question arises: Is it possible to do better? i.e.\@ \textit{Is it possible to efficiently solve the problem for both vectors simultaneously?}
In order to answer this question, we introduce the problem of Nonnegative Factorization\footnote{This terminology has already been used for the
problem of finding a symmetric nonnegative factorization, i.e.\@ one where V=W, but we assign it a different meaning in this paper.
} which is exactly the same as Nonnegative Matrix Factorization except that the matrix to factorize can be \textit{any real matrix}, i.e.\@ is not necessarily nonnegative.
Given $M \in \mathbb{R}^{m \times n}$ and $r \in \mathbb{N}_0$, the Nonnegative Factorization optimization problem using the Frobenius norm is:
\begin{align}
\min_{V \in \mathbb{R}^{m \times r}, W \in \mathbb{R}^{r \times n}} & ||M-VW||_F^2 \nonumber \\
&  V\geq \mathbf{0},\;\; W \geq \mathbf{0}  \label{NF}  \tag{NF}
\end{align}
Of course, this problem is a generalization of \eqref{NMF} and is NP-hard as well. However Nonnegative Factorization will be shown below to be NP-hard for any \emph{fixed} factorization rank (even $r=1$), which is not the case of \eqref{NMF} (cf. Section~\ref{complexityNMF}). The proof is based on the reduction to the maximum edge biclique problem.

\subsection{Complexity}
\label{complex}

The main result of this section is the NP-hardness result for Nonnegative Factorization for any fixed factorization rank. We first show how the optimization version of the maximum edge biclique problem \eqref{MBP} can be formulated as a rank-one Nonnegative Factorization problem \eqref{NF1}. Since the decision version of \eqref{MBP} is NP-complete \cite{Peet}, this implies that \eqref{NF1} is NP-hard. We then prove that \eqref{NF} is NP-hard as well using a simple construction.

\subsubsection*{The Maximum Edge Biclique Problem in Bipartite Graphs} \label{aboutbic}

A \emph{bipartite graph} $G_b$ is a graph whose vertices can be divided into two disjoint sets $V_1$ and $V_2$ such
that there is no edge between two vertices in the same set
\begin{displaymath}
G_b = (V,E) = \Big(V_1 \cup V_2, E \subseteq (V_1 \times V_2) \Big).
\end{displaymath}
A \emph{biclique} $K_b$ is a complete bipartite graph i.e.\@ a bipartite graph where all the vertices are connected
\begin{displaymath}
K_b = (V',E') = \Big(V'_1 \cup V'_2, E' = (V'_1 \times V'_2) \Big).
\end{displaymath}

\noindent Finally, the so-called maximum edge biclique problem in a bipartite graph~$G_b = (V,E)$ is the problem of finding a biclique~$K_b = (V',E')$ in $G_b$ (i.e.\@ $V' \subseteq V$ and $E' \subseteq E$) maximizing the number of edges. The decision problem: \textit{Given $B$, does $G_b$ contain a biclique with at least $B$ edges?} has been shown to be NP-complete \cite{Peet}. Therefore the corresponding optimization problem is at least NP-hard.\\

Let $M_b \in \{ 0,1 \}^{m \times n}$ be the adjacency matrix of the unweighted bipartite graph $G_b = (V_1 \cup V_2,E)$ 
i.e.\@ $M_b(i,j) = 1$ if and only if $(V_1(i),V_2(j)) \in E$.
In order to avoid trivialities, we will suppose that each vertex of the graph is connected to at least one other vertex i.e.~\@ $M_b(i,:) \neq \mathbf{0}, M_b(:,j) \neq \mathbf{0}, \forall i,j$.
We denote by $|E|$ the cardinality of $E$ i.e.\@ the number of edges in $G_b$; note that~$|E| = ||M_b||_F^2$. The set of zero values will be called $Z = \{(i,j) \, | \, M_b(i,j) = 0\}$, and its cardinality $|Z|$ satisfies $|E| + |Z| = mn$.\\

\noindent With this notation, the maximum biclique problem in $G_b$ can be formulated as
\begin{align}
\qquad \qquad \min_{v, w} \qquad & ||M_b-vw||_F^2 \nonumber \\
&  v \in \{ 0,1 \}^{m}, w \in \{ 0,1 \}^{n} \label{MBP}  \tag{MBP} \\
&  v_i w_j \leq M_b(i,j), \, \forall i,j \nonumber
\end{align}
In fact, one can check easily that this objective is equivalent to $\max_{v,w} \sum_{ij} v_iw_j$ since $M_b$, $v$ and $w$ are binary: instead of maximizing the number of edges inside the biclique, one minimizes the number of edges outside. \\
Feasible solutions of \eqref{MBP} correspond to bicliques of $G_b$.
We will be particularly interested in \emph{maximal} bicliques. A maximal biclique is a biclique which is not contained in a larger biclique: it is a locally optimal solution of \eqref{MBP}.\\
\noindent The corresponding rank-one Nonnegative Factorization problem is defined as
\begin{align}
\quad \; \min_{v \in \mathbb{R}^{m}, w \in \mathbb{R}^{n}} & ||M_d-vw||_F^2 \nonumber \\
&  v \geq \mathbf{0},\; w \geq \mathbf{0} \label{NF1} \tag{NF-1d}
\end{align}
with the matrix $M_d$ defined as
\begin{equation}
\label{Md}
M_d = (1+d)M_b - d \, \mathbf{1}_{m \times n}, \quad d > 0
\end{equation}
where $\mathbf{1}_{m \times n}$ is the matrix of all ones with dimension $m \times n$. $M_d$ is the matrix $M_b$ where the zero values have been replaced by $-d$. Clearly $M_d$ is not necessarily a nonnegative matrix.\\

\noindent To prove NP-hardness of \eqref{NF1}, we are going to show that, if $d$ is sufficiently large, optimal solutions of \eqref{NF1} \textit{coincide} with optimal solutions of the corresponding biclique problem \eqref{MBP}.
From now on, we say that a solution $(v,w)$ coincides with another solution $(v',w')$  if and only if $vw = v'w'$ (i.e.\@ if and only if $v' = \lambda v$ and $w' = \lambda^{-1} w$ for some $\lambda > 0$). We also let $M_+ = \max(\mathbf{0},M)$ and $M_- = \max(\mathbf{0},-M)$. \vspace{0.1cm}

\begin{lemma}
\label{lem1}
Any optimal rank-one approximation with respect to the Frobenius norm of a matrix $M$ for which $\min(M) \leq -||M_+||_F$ contains at least one nonpositive entry.
\end{lemma}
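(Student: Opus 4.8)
The plan is to argue by contradiction. Suppose that some optimal rank-one approximation $X^\star$ of $M$ (in Frobenius norm) has \emph{all} its entries strictly positive, and let us derive a contradiction with the hypothesis $\min(M) \le -||M_+||_F$. Writing $X^\star = vw$ with $v \in \mathbb{R}^m$ and $w \in \mathbb{R}^n$, positivity of every entry $v_iw_j$ forces, after possibly flipping the signs of both $v$ and $w$, that $v > \mathbf{0}$ and $w > \mathbf{0}$; in particular $X^\star \neq \mathbf{0}$.

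The first real step is to extract a stationarity condition from optimality. Since $X^\star$ minimizes $||M - X||_F^2$ over all rank-one matrices, it is in particular optimal along the ray $\{tX^\star : t \in \mathbb{R}\}$; the function $t \mapsto ||M||_F^2 - 2t\langle M, X^\star\rangle + t^2||X^\star||_F^2$ is a smooth quadratic with a minimum at $t = 1$, so its derivative vanishes there, giving $\langle M, X^\star\rangle = ||X^\star||_F^2$ and hence $||M - X^\star||_F^2 = ||M||_F^2 - ||X^\star||_F^2$. The second step is to bound $||X^\star||_F$: decomposing $M = M_+ - M_-$ and using $X^\star, M_- \geq \mathbf{0}$, we get $||X^\star||_F^2 = \langle M_+, X^\star\rangle - \langle M_-, X^\star\rangle$. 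Pick an index $(p,q)$ attaining $\min(M)$; the hypothesis yields $M_{pq} = \min(M) < 0$ with $M_{pq}^2 \geq ||M_+||_F^2$ (the degenerate case $\min(M) = 0$ forces $M = \mathbf{0}$, whose optimal rank-one approximation is $\mathbf{0}$, contradicting strict positivity of $X^\star$). Because $X^\star$ is strictly positive everywhere, $\langle M_-, X^\star\rangle \geq |M_{pq}|\,X^\star_{pq} > 0$, so $||X^\star||_F^2 < \langle M_+, X^\star\rangle \leq ||M_+||_F\,||X^\star||_F$ by Cauchy--Schwarz, i.e.\ $||X^\star||_F < ||M_+||_F$.

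The final step is to exhibit a strictly better rank-one competitor. Using $||M||_F^2 = ||M_+||_F^2 + ||M_-||_F^2$, the previous bound gives $||M - X^\star||_F^2 = ||M||_F^2 - ||X^\star||_F^2 > ||M_-||_F^2$. On the other hand, the rank-one matrix $Y$ having $Y_{pq} = M_{pq}$ and all other entries zero satisfies $||M - Y||_F^2 = ||M||_F^2 - M_{pq}^2 \leq ||M||_F^2 - ||M_+||_F^2 = ||M_-||_F^2 < ||M - X^\star||_F^2$, contradicting the optimality of $X^\star$. Therefore any optimal rank-one approximation must contain a nonpositive entry.

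I expect the only delicate point to be turning the qualitative observation ``a strictly positive $X^\star$ is poorly aligned with the negative part of $M$'' into the clean quantitative bound $||X^\star||_F < ||M_+||_F$ --- this is exactly where the stationarity identity $\langle M, X^\star\rangle = ||X^\star||_F^2$ and the strict positivity of the term $\langle M_-, X^\star\rangle$ both get used, and one must be careful that strictness survives. The rest (expanding Frobenius norms, the choice of the one-entry competitor $Y$, the identity $||M||_F^2 = ||M_+||_F^2 + ||M_-||_F^2$) is routine.
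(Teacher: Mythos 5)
Your proof is correct, and it reaches the contradiction by a genuinely different route from the paper's in both halves of the argument. The paper, like you, plays the error of a hypothetical everywhere-positive optimal approximation against the threshold $||M_-||_F^2$, but it gets the strict lower bound $||M-vw||_F^2 > ||M_-||_F^2$ immediately from the entrywise observation that every negative entry of $M$ is being approximated by a strictly positive number, and it gets the matching upper bound on the optimal error from the Eckart--Young identity $||M-vw||_F^2 = ||M||_F^2 - ||M||_2^2$ together with $||M||_2 \geq |\min(M)| \geq ||M_+||_F$. You instead derive the lower bound from the first-order stationarity identity $\langle M, X^\star\rangle = ||X^\star||_F^2$ (essentially the ``scaled'' property that reappears as Theorem~\ref{scth}) combined with Cauchy--Schwarz and the strictly positive term $\langle M_-, X^\star\rangle$, and you replace Eckart--Young by the explicit rank-one competitor that agrees with $M$ in the minimizing entry $(p,q)$ and vanishes elsewhere. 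What each approach buys: yours is entirely elementary and self-contained (no singular values, no appeal to the global characterization of the best rank-one approximation), while the paper's entrywise argument for the lower bound is shorter and more transparent than your stationarity-plus-Cauchy--Schwarz derivation. The delicate points in your version --- strictness of $\langle M_-,X^\star\rangle>0$, dividing by $||X^\star||_F>0$ in the Cauchy--Schwarz step, and the degenerate cases $M=\mathbf{0}$ and $M_+=\mathbf{0}$ --- are all handled, so no gap remains.
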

\begin{proof}
If $M = \mathbf{0}$, the result is trivial.
If not, $\min(M) < 0$ since $\min(M) \leq -||M_+||_F$. Suppose now $(v,w) > 0$ is a best rank-one approximation of $M$. Therefore, since the negative values of $M$ are approximated by positive ones and since $M$ has at least one negative entry, we have
\begin{equation} \label{lw}
||M-vw||_F^2 > ||M_-||_F^2.
\end{equation}
By the Eckart-Young theorem,
\begin{displaymath}
||M-vw||_F^2 = ||M||_F^2 - \sigma_{max}(M)^2 = ||M||_F^2 -||M||_2^2,
\end{displaymath}
where  $\sigma_{max}(M)$ is the maximum singular value of $M$. Clearly,
\[ ||M||_F^2 = ||M_+||_F^2 + ||M_-||_F^2 \quad \textrm{ and } \quad ||M||_2^2 \geq \min(M)^2. \]
So
\begin{displaymath}
||M-vw||_F^2 \leq ||M_+||_F^2 + ||M_-||_F^2 - \min(M)^2 \leq ||M_-||_F
\end{displaymath}
which is in contradiction with \eqref{lw}.
\end{proof} \vspace{0.2cm}

We restate here a well-known result concerning low-rank approximations (see e.g.~\cite[p. 29]{diep}).
\begin{lemma}
\label{lem2}
The local minima of the best rank-one approximation problem with respect to the Frobenius norm are global minima.
\end{lemma}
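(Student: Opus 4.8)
The plan is to reduce everything to a computation with the singular value decomposition of $M$, after the usual remark that if $M=\mathbf{0}$ the objective $\|M-vw\|_F^2=\|v\|_2^2\|w\|_2^2$ is trivially handled. So assume $M\neq\mathbf{0}$, write an SVD $M=\sum_i\sigma_iu_ix_i^T$ with $\sigma_1=\sigma_{\max}(M)>0$, and recall (Eckart--Young, as already used in Lemma~\ref{lem1}) that the global minimum value of $f(v,w):=\|M-vw\|_F^2$ is $\|M\|_F^2-\sigma_1^2$. Since $f$ is smooth on the whole (boundaryless) space $\mathbb{R}^m\times\mathbb{R}^n$, every local minimum is a stationary point, i.e.\ satisfies $(M-vw)w=\mathbf{0}$ and $(M-vw)^Tv=\mathbf{0}$; the first step is therefore to classify stationary points.

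If $v=\mathbf{0}$ (the case $w=\mathbf{0}$ being symmetric), stationarity forces $Mw=\mathbf{0}$, hence $f(v,w)=\|M\|_F^2$, and I would rule this out as a local minimum by an explicit descent perturbation: moving to $(\varepsilon u_1,\,w+\varepsilon c\,x_1)$ and checking that, because $x_1\perp w$ (as $Mw=\mathbf 0$) and $u_1^TMx_1=\sigma_1>0$, the leading correction is $\varepsilon^2(\|w\|_2^2-2c\,\sigma_1)$, which is negative for $c$ large. If instead $v,w\neq\mathbf{0}$, the stationarity equations read $Mw=\|w\|_2^2\,v$ and $M^Tv=\|v\|_2^2\,w$; substituting one into the other gives $MM^Tv=(\|v\|_2^2\|w\|_2^2)\,v$, so $v$ is an eigenvector of $MM^T$, the positive number $\sigma^2:=\|v\|_2^2\|w\|_2^2$ is a squared singular value of $M$, and using $v^TMw=\|v\|_2^2\|w\|_2^2$ one gets $f(v,w)=\|M\|_F^2-\sigma^2$.

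The main step is then to show that a stationary point with $\sigma<\sigma_1$ is not a local minimum. Using the scaling symmetry $(v,w)\mapsto(\lambda v,\lambda^{-1}w)$, which is a $f$-preserving diffeomorphism of $(\mathbb{R}^m\setminus\{\mathbf 0\})\times(\mathbb{R}^n\setminus\{\mathbf 0\})$ and hence maps local minima to local minima, I may assume $v=\sqrt{\sigma}\,\hat u$ and $w=\sqrt{\sigma}\,\hat x$ with $\hat u,\hat x$ unit vectors satisfying $M\hat x=\sigma\hat u$, $M^T\hat u=\sigma\hat x$; since $0<\sigma<\sigma_1$, the vectors $\hat u$ and $u_1$ are orthogonal (distinct eigenvalues of the symmetric $MM^T$) and likewise $\hat x\perp x_1$. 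Along the path $v(t)=\sqrt{\sigma}\,\hat u+t\,u_1$, $w(t)=\sqrt{\sigma}\,\hat x+t\,x_1$, the residual $M-v(t)w(t)$ decomposes into pairwise Frobenius-orthogonal rank-one terms, so $\|M-v(t)w(t)\|_F^2$ can be summed term by term; I expect it to equal $f(v,w)+2t^2(\sigma-\sigma_1)+t^4$, which is strictly smaller than $f(v,w)$ for all small $t\neq0$. Hence no stationary point with $\sigma<\sigma_1$ is a local minimum, so every local minimum has value $\|M\|_F^2-\sigma_1^2$ and is therefore global. The only potentially delicate points are bookkeeping: arguing that a local minimum must be stationary, covering the trivial/degenerate cases ($M=\mathbf0$, $M$ of rank one, repeated $\sigma_1$), and verifying the Frobenius-orthogonality of the rank-one pieces in the path computation — Eckart--Young supplies all of the genuine analytic content.
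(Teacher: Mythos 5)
The paper never actually proves Lemma~\ref{lem2}: it is restated as a well-known fact with a pointer to the literature (Ho's thesis, p.~29), so there is no in-paper argument to compare against, and your self-contained proof is the genuine content here. Your argument is correct. It follows the standard route: since $f(v,w)=\|M-vw\|_F^2$ is smooth on an unconstrained domain, every local minimum is stationary; the degenerate stationary points ($v=\mathbf{0}$ or $w=\mathbf{0}$, forcing $Mw^T=\mathbf{0}$ resp.\ $M^Tv=\mathbf{0}$, with value $\|M\|_F^2$) are killed by your perturbation, which indeed yields $f(\varepsilon u_1,\,w+\varepsilon c\,x_1)=\|M\|_F^2+\varepsilon^2\bigl(\|w\|_2^2-2c\sigma_1\bigr)+\varepsilon^4c^2$ using $x_1\perp w$ and $\sigma_1>0$; the nontrivial stationary points are scaled singular pairs with value $\|M\|_F^2-\sigma^2$; and along your path the identity $f=\|M\|_F^2-2\,v(t)^TMw(t)^T+\|v(t)\|_2^2\|w(t)\|_2^2$ together with $\hat u\perp u_1$, $\hat x\perp x_1$ gives exactly $\|M\|_F^2-\sigma^2+2t^2(\sigma-\sigma_1)+t^4$, so no stationary point with $\sigma<\sigma_1$ is a local minimum --- you do not even need the Frobenius-orthogonal decomposition of the residual, only this scalar expansion. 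The degenerate cases you flag really are bookkeeping: a repeated $\sigma_1$ is harmless because the orthogonality argument only uses $\sigma^2\neq\sigma_1^2$ as distinct eigenvalues of $MM^T$, and $M=\mathbf{0}$ makes every local minimum have value $0=\|M\|_F^2-\sigma_1^2$ anyway. The one point worth stating explicitly rather than leaving implicit is that the scaling map $(v,w)\mapsto(\lambda v,\lambda^{-1}w)$ is an $f$-preserving diffeomorphism away from the origin, which justifies the normalization $v=\sqrt{\sigma}\,\hat u$, $w=\sqrt{\sigma}\,\hat x$; with that said, the proof is complete.
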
 \vspace{0.2cm}

We can now state the main result about the equivalence of \eqref{NF1} and \eqref{MBP}.
\begin{theorem}
\label{thp}
For $d \geq \sqrt{|E|}$, any optimal solution (v,w) of \eqref{NF1} coincides with an optimal solution of \eqref{MBP}, i.e.\@ $vw$ is binary and $vw \leq M_b$.
\end{theorem}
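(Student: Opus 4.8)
The plan is to study an optimal solution $(v,w)$ of \eqref{NF1} through the rank-one nonnegative matrix $P = vw$, reducing every question about $P$ to a statement about best rank-one approximations of a submatrix of $M_d$, so that Lemmas~\ref{lem1} and~\ref{lem2} apply.

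First I would dispose of the degenerate case: if $v = \mathbf 0$ or $w = \mathbf 0$ then $P = \mathbf 0$ and the objective equals $\|M_d\|_F^2 = |E| + d^2|Z|$; since every vertex of $G_b$ has degree at least one, $E \neq \emptyset$, and placing a single $1$ on one edge of $M_b$ strictly decreases the objective, so $P = \mathbf 0$ is not optimal. Hence at an optimum $v \neq \mathbf 0 \neq w$; set $I = \mathrm{supp}(v)$, $J = \mathrm{supp}(w)$ (both nonempty), so that $P$ is strictly positive on the combinatorial rectangle $I \times J$ and zero elsewhere. Splitting $\|M_d - P\|_F^2$ into its restriction to $I\times J$ and the part outside (which depends only on $I,J$), one sees that $P|_{I\times J}$ must be an optimal \emph{nonnegative} rank-one approximation of the submatrix $N := M_d(I,J)$: otherwise, replacing $P$ on $I\times J$ by a strictly better nonnegative rank-one matrix (extended by zeros) would improve the full objective, contradicting optimality.

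The key step, and the main obstacle, is to show that $I\times J$ is a biclique of $G_b$, equivalently that $N$ has no entry equal to $-d$. Suppose it does. Then $\min(N) = -d$, while $\|N_+\|_F^2$ equals the number of ones of $N$, which is at most $|E|$; hence $\min(N) = -d \leq -\sqrt{|E|} \leq -\|N_+\|_F$, so Lemma~\ref{lem1} applies to $N$ and every optimal (unconstrained) rank-one approximation of $N$ has a nonpositive entry. But $P|_{I\times J}$ is strictly positive; being the optimal nonnegative rank-one approximation of $N$ and lying in the interior of the nonnegative orthant, it is a local minimum of the unconstrained best rank-one approximation problem for $N$, hence by Lemma~\ref{lem2} a global one, and must therefore have a nonpositive entry --- a contradiction. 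Consequently $I\times J \subseteq E$ and $N = \mathbf 1_{|I|\times|J|}$.

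Once $N$ is the all-ones matrix, its unique optimal nonnegative rank-one approximation is $\mathbf 1$ itself (with zero error), so $v_i w_j = 1$ for all $(i,j)\in I\times J$ and $P = vw$ is exactly the indicator matrix of the biclique $I\times J$; in particular $vw$ is binary and $vw \leq M_b$, which is feasible for \eqref{MBP}. Finally, for such a $P$ the objective of \eqref{NF1} equals $\|M_d - P\|_F^2 = (|E| - |I|\,|J|) + d^2|Z|$, since $P$ agrees with $M_d$ on $I\times J$ and vanishes outside; this is minimized precisely when $|I|\,|J|$, the number of edges of the biclique, is largest. Hence the biclique associated with any optimal $(v,w)$ is a maximum edge biclique, i.e. $(v,w)$ coincides with an optimal solution of \eqref{MBP}.
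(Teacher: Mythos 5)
Your proof is correct and takes essentially the same route as the paper's: restrict to the supports $I=\mathrm{supp}(v)$, $J=\mathrm{supp}(w)$, observe that $vw$ restricted there must be an optimal nonnegative rank-one approximation of the submatrix, and combine Lemma~\ref{lem1} with Lemma~\ref{lem2} to rule out any $-d$ entry in that submatrix. The only differences are that you explicitly dispose of the trivial solution and add the final edge count showing the resulting biclique is maximum --- details the paper's proof leaves implicit.
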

\begin{proof}
We focus on the entries of $vw$ which are positive and define
\begin{equation} \label{ST}
K = \Big\{i \in \{1,2,\dots,m\} \;\Big|\; v_i > 0\Big\} \quad \textrm{ and } \quad
L = \Big\{j \in \{1,2,\dots,n\} \;\Big|\; w_j > 0\Big\}.
\end{equation}
$v' = v(K)$, $w' = w(L)$ and $M_d' = M_d(K,L)$ are the submatrices with indexes in $(K,L)$.
Since $(v,w)$ is optimal for $M_d$, $(v',w')$ must be optimal for $M_d'$.
Suppose there is a $-d$ entry in $M_d'$, then
\[ \min(M_d') = -d \leq -\sqrt{|E|} = -||(M_d)_+||_F \leq -||(M_d')_+||_F,\]
so that Lemma \ref{lem1} holds for $M_d'$. Since $(v',w')$ is positive and is an optimal solution of \eqref{NF1} for $M_d'$, $(v',w')$ is a local minimum of the unconstrained problem i.e.\@ the problem of best rank-one approximation. By Lemma \ref{lem2}, this must be a global minimum. This is a contradiction with Lemma~\ref{lem1}: $(v',w')$ should contain at least one nonpositive entry.
Therefore, $M'_d = \mathbf{1}_{|K| \times |L|}$ which implies $v'w' = M'_d$ by optimality and then $vw$ is binary and $vw \leq M_b$.
\end{proof} \vspace{0.2cm}

\begin{corollary}
Rank-one Nonnegative Factorization is NP-hard.
\label{NF1np}
\end{corollary}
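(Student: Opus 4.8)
The plan is to give a polynomial-time reduction from the decision version of the maximum edge biclique problem, which is NP-complete by~\cite{Peet}, to the rank-one Nonnegative Factorization problem, using Theorem~\ref{thp} as the key ingredient. Given an instance of the biclique decision problem consisting of a bipartite graph $G_b$ with adjacency matrix $M_b \in \{0,1\}^{m \times n}$ (with $|E|$ edges and $|Z|$ zero entries, $|E|+|Z|=mn$) together with an integer bound $B$, I would form the matrix $M_d$ of~\eqref{Md} with $d = \lceil \sqrt{|E|} \rceil$. This value of $d$ is a positive integer, satisfies $d \geq \sqrt{|E|}$ so that Theorem~\ref{thp} is applicable, and is bounded by a polynomial in $m$ and $n$ (since $|E| \leq mn$); hence $M_d$ has integer entries and can be written down in time polynomial in the size of $G_b$.

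Next I would translate the optimal value of~\eqref{NF1} for this $M_d$ into the size of a largest biclique of $G_b$. On the one hand, by Theorem~\ref{thp} every optimal solution $(v,w)$ of~\eqref{NF1} satisfies $vw \leq M_b$ with $vw$ binary, so $vw$ is the $0/1$ indicator of a product set $K \times L \subseteq E$, i.e.\@ of a biclique of $G_b$, and a direct computation gives
\[
\|M_d - vw\|_F^2 = \sum_{(i,j)\in E\setminus (K\times L)} 1^2 \; + \sum_{(i,j)\in Z} d^2 \; = \; \big(\,|E| - |K|\,|L|\,\big) + |Z|\,d^2 .
\]
On the other hand, taking $v$ and $w$ equal to the $0/1$ indicator vectors of the two sides of any biclique $K\times L$ of $G_b$ produces a feasible point of~\eqref{NF1} whose objective value is exactly the right-hand side above. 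Combining the two observations, the optimal value of~\eqref{NF1} equals $|E| + |Z|\,d^2 - B^\star$, where $B^\star$ is the maximum number of edges of a biclique of $G_b$; in particular this optimal value is an integer.

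It then follows that $G_b$ admits a biclique with at least $B$ edges if and only if the optimal value of~\eqref{NF1} built from $M_d$ is at most $|E| + |Z|\,d^2 - B$. Since the matrix $M_d$ and this threshold are computed in polynomial time, any algorithm that solves the rank-one Nonnegative Factorization optimization problem (in particular, that returns its optimal value) can be used to decide the NP-complete biclique problem, so rank-one Nonnegative Factorization is NP-hard.

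I expect the substantive difficulty to be entirely contained in Theorem~\ref{thp}, which is what forces the optimal factors of~\eqref{NF1} to be binary and to respect the support of $M_b$; once that is granted, the only points requiring care are bookkeeping ones — choosing $d$ to be simultaneously an integer, at least $\sqrt{|E|}$, and polynomially bounded, and verifying the affine identity above so that minimizing the Frobenius residual and maximizing the edge count of the biclique are exactly, not merely monotonically, equivalent.
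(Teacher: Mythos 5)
Your proposal is correct and is essentially the paper's own argument: the paper states Corollary~\ref{NF1np} as an immediate consequence of Theorem~\ref{thp} together with the NP-completeness of the biclique decision problem, and you have simply written out the routine reduction details (an integer, polynomially bounded choice of $d$, and the affine correspondence $\|M_d-vw\|_F^2=(|E|-|K||L|)+|Z|d^2$ between the two objective values) that the paper leaves implicit. No gaps.
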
 \vspace{0.2cm}

Intuitively, the reason \eqref{NF1} is NP-hard is that if one of the $-d$ entries of $M_d$ is approximated by a positive value, say $p$, the corresponding error is 
$d^2 + \mathbf{2pd} + p^2$. Therefore, the larger $d$, the more expensive it is to approximate $-d$ by a positive number. Because of that, when $d$ increases, negatives values of $M_d$ will be approximated by smaller values and eventually by zeros.

Therefore, for each negative entry of $M$, one has to decide whether to approximate it with zero or with a positive value. Moreover, when a value is approximated by zero, one has to choose which entries of $V$ and $W$ will be equal to zero, as in the biclique problem. We suspect that the hardness of Nonnegative Factorization lies in these combinatorial choices.\\

\noindent We can now answer our initial question: Would it be possible to solve efficiently the problem
\begin{displaymath}
V_{:k}W_{k:} \approx R_k = M - \sum_{i\neq k}V_{:i}W_{i:} \ngeq \mathbf{0}
\end{displaymath}
simultaneously for both vectors $(V_{:k},W_{k:})$?
Obviously, unless P=NP, we won't be able to find a polynomial-time algorithm to solve this problem.
Therefore, it seems hopeless to improve the HALS algorithm using this approach.\\

\begin{remark}
Corollary \ref{NF1np} suggests that NMF is a difficult problem for any fixed $r\geq2$. Indeed, even if one was given the optimal solution of an NMF problem except for one rank-one factor, it is not guaranteed that one would be able to find this last factor in polynomial-time since the corresponding residue is not necessarily nonnegative.
\end{remark} \vspace{0.2cm}

We now generalize Theorem~\ref{NF1np} to factorizations of arbitrary rank.
\begin{theorem}
\label{nfNP}
Nonnegative Factorization \eqref{NF} is NP-hard.
\end{theorem}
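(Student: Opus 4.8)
The plan is to reduce from the already-established NP-hardness of rank-one Nonnegative Factorization \eqref{NF1} (Corollary~\ref{NF1np}). Given an instance $M_d \in \mathbb{R}^{m \times n}$ of \eqref{NF1}, I want to build an instance $\widetilde{M} \in \mathbb{R}^{\widetilde m \times \widetilde n}$ of rank-$r$ Nonnegative Factorization whose optimal value encodes the optimal value of \eqref{NF1}. The natural construction is to place $r$ ``copies'' of the hard rank-one instance into disjoint blocks of a larger matrix, padding the off-diagonal blocks with entries that force the optimal rank-$r$ factorization to respect the block structure. Concretely, I would take $\widetilde{M}$ to be block-diagonal with the blocks $M_d, M_d, \dots, M_d$ ($r$ times) on the diagonal, and fill every off-diagonal block with a sufficiently large negative constant $-D$ (just as $-d$ was used to kill unwanted support in the rank-one reduction).

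The key steps, in order, would be: (1) Show that for $D$ large enough, any optimal $(V,W)\ge 0$ for $\widetilde{M}$ must have each of its $r$ rank-one terms $V_{:k}W_{k:}$ supported inside a single diagonal block — because approximating a very negative off-diagonal entry by a positive product is too costly, exactly the intuition spelled out after Corollary~\ref{NF1np} ($d^2 + 2pd + p^2$ grows with $d$). One can make this rigorous by a counting/perturbation argument: if some rank-one term had support straddling two blocks, zeroing out the offending coordinates of $V$ or $W$ could only decrease the error once $D$ dominates $\|\widetilde{M}_+\|_F$, in the spirit of Lemmas~\ref{lem1}–\ref{lem2} and the proof of Theorem~\ref{thp}. (2) Argue a pigeonhole/counting point: since $\widetilde{M}$ restricted to each diagonal block has (generically) rank requiring at least one rank-one term to approximate it nontrivially, and there are $r$ blocks and only $r$ terms available, each block must receive exactly one rank-one term. (3) Conclude that the rank-$r$ problem on $\widetilde{M}$ decouples into $r$ independent copies of rank-one NF on $M_d$, so its optimal value is $r$ times the optimal value of \eqref{NF1} plus the fixed contribution $(r^2-r)mn\,D^2$ from the off-diagonal blocks (which are all approximated by zero). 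Hence solving rank-$r$ NF on $\widetilde{M}$ in polynomial time would solve \eqref{NF1}, giving NP-hardness of \eqref{NF} for every fixed $r$.

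I would also need to handle the edge case $r=1$, which is just Corollary~\ref{NF1np} verbatim, and to make sure the blow-up in dimensions and in the magnitude of $D$ is polynomial in the input size: choosing $D$ of the order $\sqrt{r m n}\cdot(1+d)$ suffices and has polynomially-bounded bit-length, and the matrix $\widetilde{M}$ has dimensions $rm \times rn$, so the reduction is polynomial.

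The main obstacle I anticipate is step~(2): carefully showing that each diagonal block genuinely \emph{needs} one of the rank-one terms — i.e., that no block can be ``left out'' while the others share their two terms more cleverly. The rank-one terms are globally coupled only through the nonnegativity constraint and the partition of support forced in step~(1); once support is block-confined, a term assigned to block $k$ contributes nothing to block $k'\neq k$, so if some block gets zero terms its entire positive part $(M_d)_+$ is approximated by $\mathbf 0$, incurring error $\|(M_d)_+\|_F^2 = |E|$ on that block, whereas the ``double-dipped'' block gains at most the best rank-two (hence, by Lemma~\ref{lem2}-type reasoning, still only rank-one-useful) improvement. Showing that this trade is never profitable — for instance by ensuring each $M_d$ is chosen so that its optimal rank-one NF error is strictly smaller than $|E|$, which holds whenever $G_b$ contains at least one edge — pins down the decoupling and completes the argument.
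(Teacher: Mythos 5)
Your construction is exactly the paper's: the matrix $A_d = (1+d)\,\mathrm{diag}(M_b,\dots,M_b) - d\,\mathbf{1}$ used in the proof of Theorem~\ref{nfNP} is your $\widetilde M$ with $D=d$ (diagonal blocks equal to $M_d$, off-diagonal blocks filled with $-d$), and your step (1) is the paper's argument: apply Lemma~\ref{lem1} and Lemma~\ref{lem2} to each residual $R_k \leq A_d$, for which $\min(R_k) \leq -d \leq -||(R_k)_+||_F$, to conclude that every rank-one factor vanishes on all $-d$ entries and is therefore supported on a biclique of the repeated graph. The only real divergence is presentational: the paper reduces directly from \eqref{MBP} by reading a maximum biclique of $G_b$ off any factor of the optimal $(V,W)$, whereas you reduce from \eqref{NF1} via the optimal value.

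The genuine problem is your justification of step (2). First, a block that receives no rank-one term is approximated by $\mathbf{0}$ and incurs error $||M_d||_F^2 = |E| + |Z|d^2$, not $||(M_d)_+||_F^2 = |E|$; the $-d$ entries do not drop out of the cost. Second, Lemma~\ref{lem2} does not say that a rank-two factorization is ``only rank-one-useful'': it says local minima of the rank-one approximation problem are global, and in fact two factors placed in the same block can strictly beat one factor there (two disjoint maximal bicliques cover more edges than one), so your stated reason that the trade ``is never profitable'' does not hold up. The repair is an explicit marginal-gain count: since every factor's support is a biclique of that block, a second factor in a block lowers that block's error by at most $b^*$ (the maximum biclique size), i.e. $e_1 - e_2 \leq b^* = e_0 - e_1$ where $e_k$ is the best error achievable in one block with $k$ factors; hence emptying one block to double up another is never strictly advantageous, and the optimal value equals $r\bigl(|E| - b^* + |Z|d^2\bigr)$ plus the constant off-diagonal contribution. (Note also that step (2) as literally stated --- each block gets \emph{exactly} one term --- can fail in the presence of ties; only this value identity is needed, and it is what survives.) With that one correction your argument coincides with the paper's proof.
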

\begin{proof}
Let $M_b \in \{0,1\}^{m \times n}$ be the adjacency matrix of a bipartite graph $G_b$ and $r \geq 1$ the factorization rank of \eqref{NF}.
We define the matrix $A_b$ as
\begin{displaymath}
A_b = \textrm{diag}(M_b,r) = \left( \begin{array}{cccc}   M_b & \mathbf{0} & \dots & \mathbf{0} \\
\mathbf{0} & M_b & & \mathbf{0} \\
\vdots & & \ddots & \vdots \\
\mathbf{0} & \dots & & M_b
 \end{array} \right)
\end{displaymath}
which is the adjacency matrix of another bipartite graph $G_b'$ which is nothing but the graph $G_b$ repeated $r$
times. $A_d$ is defined in the same way as $M_d$, i.e.
\[ A_d = (1+d)A_b - d \, \mathbf{1}_{m \times n}\]
with $d \geq  \sqrt{r|E|}$. Let $(V,W)$ be the optimal rank-$r$ nonnegative factorization of $A_d$ and consider each rank-one factor $V_{:k}W_{k:} \approx R_k = A_d - \sum_{i\neq k} V_{:i}W_{i:}$: each of them must clearly be an optimal rank-one nonnegative factorization of $R_k$.
Since $R_k \leq A_d$,
\[ \min(R_k) \leq \min(A_d) = -d \leq -||(A_d)_+||_F \leq -||(R_k)_+||_F,\]
and Lemma \ref{lem1} holds. Using exactly the same arguments as in Theorem~\ref{thp}, one can show that, $\forall k$,
\begin{displaymath}
(V_{:k}W_{k:})_{ij} = 0, \quad \forall \; (i,j) \; \textrm{ s.t. } \; A_d(i,j) = -d.
\end{displaymath}
Therefore, the positive entries of each rank-one factor will correspond to a biclique of $G_b'$.
By optimality of $(V,W)$, each rank-one factor must correspond to a maximum biclique of $G_b$ since $G_b'$ is the graph $G_b$ repeated $r$ times. Thus \eqref{NF1} is NP-hard implies \eqref{NF} is NP-hard.
\end{proof} \vspace{0.2cm}

\subsection{Stationary points of \eqref{NF1}}

We have shown that optimal solutions of \eqref{NF1} coincide with optimal solutions of \eqref{MBP} for $d \geq \sqrt{E}$, which are NP-hard to find.
In this section, we focus on stationary points of \eqref{NF1} instead: we show how they  are related to the feasible solutions of \eqref{MBP}. This result will be used in Section \ref{mbfa} to design a new type of biclique finding algorithm.

\subsubsection{Definitions and Notation}

The KKT conditions of \eqref{NF1}, which define the stationary points, are exactly the same as for \eqref{NMF}: $(v,w)$ is a stationary point of \eqref{NF1} if and only if
\begin{align}
v  \geq \mathbf{0}, \quad \mu = (vw-M_d)w^T \quad \geq \mathbf{0}  & \quad \textrm{ and } \quad v \circ \mu = \mathbf{0} \label{statnf11} \\
w \geq \mathbf{0}, \quad \lambda = v^T(vw-M_d) \quad \geq \mathbf{0}  & \quad \textrm{ and } \quad w \circ \lambda = \mathbf{0}.  \label{statnf12}
\end{align}
Of course, we are especially interested in nontrivial solutions and we then assume $v, w \neq \mathbf{0}$ so that one can check that \eqref{statnf11}-\eqref{statnf12} are equivalent to
\begin{equation} \label{statNF1}
v = \max\Big( \mathbf{0}, \frac{M_dw^T}{||w||_2^2} \Big)
\qquad \textrm{ and } \qquad
w = \max\Big( \mathbf{0}, \frac{v^TM_d}{||v||_2^2} \Big).
\end{equation}
Given $d$, we define three sets of rank-one matrices: $S_d$, corresponding to the nontrivial stationary points of \eqref{NF1}, with
\[
S_d = \{ vw \in \mathbb{R}^{m \times n}_{0}  \; | \;
(v,w) \textrm{ satisfy } \eqref{statNF1} \},
\]
$F$, corresponding to the feasible solutions of \eqref{MBP}, with
\[
F = \{ vw \in \mathbb{R}^{m \times n}
\; | \; (v,w) \textrm{ is a  feasible of } \eqref{MBP}  \},
\]
and $B$, corresponding to the maximal bicliques of \eqref{MBP}, i.e.\@ $vw \in B$ if and only if $vw \in F$ and $vw$ coincides with a maximal biclique.

\subsubsection{Stationarity of Maximal Bicliques}

The next theorem states that, for $d$ sufficiently large, the only nontrivial feasible solutions of \eqref{MBP} that are stationary points of \eqref{NF1} are the maximal bicliques.
\begin{theorem}
\label{th3v}
For $d > \max(m,n)-1$,\; $F \cap S_d = B$.
\end{theorem}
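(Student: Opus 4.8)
The plan is to prove the two inclusions $B \subseteq F \cap S_d$ and $F \cap S_d \subseteq B$ at once, by giving a purely combinatorial characterization of which feasible solutions of \eqref{MBP} satisfy the stationarity conditions \eqref{statNF1}. Let $vw \in F$ be nonzero; it comes from a feasible pair $(v,w)$ of \eqref{MBP}, so $v = \mathbf{1}_K$ and $w = \mathbf{1}_L$ are the indicator vectors of two nonempty vertex sets $K \subseteq \{1,\dots,m\}$ and $L \subseteq \{1,\dots,n\}$ that form a biclique of $G_b$, i.e.\ $M_b(i,j)=1$ for all $i \in K$, $j \in L$. Since $vw$ has all its nonzero entries equal to $1$ on the combinatorial rectangle $K \times L$, every nonnegative rank-one factorization of $vw$ has the form $(\alpha\,\mathbf{1}_K,\,\alpha^{-1}\mathbf{1}_L)$ for some $\alpha>0$, so deciding whether $vw \in S_d$ amounts to checking \eqref{statNF1} for one such representative, and the answer will not depend on $\alpha$.

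First I would substitute $(v,w) = (\mathbf{1}_K,\mathbf{1}_L)$ into \eqref{statNF1}. For a row index $i$ one has $(M_d w^T)_i = \sum_{j\in L} M_d(i,j)$; writing $a_i = |\{\, j \in L : M_b(i,j)=1 \,\}|$ for the number of neighbours of $i$ inside $L$ and using $M_d(i,j)\in\{1,-d\}$, this equals $a_i - (|L|-a_i)d$, while $\|w\|_2^2 = |L|$. For $i \in K$ the biclique property gives $a_i = |L|$, so the $i$-th component of $\max(\mathbf{0}, M_d w^T/\|w\|_2^2)$ is $1 = v_i$, as required. For $i \notin K$ the $i$-th equation of \eqref{statNF1} reads $\max\big(0,(a_i-(|L|-a_i)d)/|L|\big)=0$, i.e.\ $a_i \le (|L|-a_i)d$: if $a_i = |L|$ (vertex $i$ joined to all of $L$) the left-hand side is $|L|>0$ and the equation fails, whereas if $a_i \le |L|-1$ then $|L|-a_i\ge 1$ and $(|L|-a_i)d \ge d > \max(m,n)-1 \ge n-1 \ge |L|-1 \ge a_i$, so the equation holds. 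Hence the first equation of \eqref{statNF1} holds if and only if no vertex outside $K$ is connected to every vertex of $L$; by the symmetric computation (using $d > m-1$), the second equation holds if and only if no vertex outside $L$ is connected to every vertex of $K$.

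Combining the two conditions, $vw \in S_d$ holds (given $d > \max(m,n)-1$) exactly when the biclique $(K,L)$ admits no extension by a single vertex on either side, which is precisely what it means for $(K,L)$ to be a maximal biclique; together with $vw \in F$ this gives $vw \in B$. Conversely, any $vw \in B$ is feasible and maximal, hence satisfies \eqref{statNF1} by the same equivalence, so $vw \in F \cap S_d$; therefore $F \cap S_d = B$. The only points that need care are the elementary case analysis on $a_i$ (this is where the hypothesis $d > \max(m,n)-1$ enters, being exactly what neutralizes the ``$-d$'' contributions of the non-edges incident to an outside vertex) and the observation that non-edges inside $K \times L$ cannot occur because $(v,w)$ is feasible for \eqref{MBP}; the scaling freedom in the factorization and the excluded trivial matrix $vw = \mathbf{0}$ cause no difficulty.
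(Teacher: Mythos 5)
Your proof is correct and follows essentially the same route as the paper: both arguments check the stationarity conditions \eqref{statNF1} directly on a binary rank-one feasible solution and show that, once $d$ exceeds $\max(m,n)-1$, they are satisfied exactly when no single vertex can be added to either side of the biclique, i.e.\ exactly at the maximal bicliques. Your version merely makes explicit the counting ($a_i - (|L|-a_i)d$ versus $\|w\|_2^2=|L|$) and the normalization of the factors that the paper's terser proof leaves implicit.
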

\begin{proof}
$vw \in B$ if and only if $vw \in F$ and is maximal i.e.
\begin{itemize}
\item[(1)] $\nexists i  \textrm{ such that }  v_i = 0
\textrm{ and }  M_d(i,j) = 1, \forall j \textrm{ s.t. } w_j \neq 0,$
\item[(2)] $\nexists j  \textrm{ such that }  w_j = 0
\textrm{ and }  M_d(i,j) = 1, \forall i \textrm{ s.t. } v_i \neq 0.$
\end{itemize}
Since $vw$ is binary and $v, w\neq \mathbf{0}$, the nonzero entries of $v$ and $w$ must be equal to each other. Moreover, $d > \max(m,n)-1$ so that (1) is equivalent to
\vspace{0.1cm}
\begin{center}
$\quad \nexists \; i \quad \textrm{ such that } \quad v_i = 0 \quad  \textrm{ and } \quad M_d(i,:)w^T > 0$  \\
$\iff$   \\
$v_i = 0 \; \Rightarrow \; M_d(i,:)w^T < 0
\quad \textrm{ and } \quad v_i \neq 0 \;  \Rightarrow \; v_i = \frac{||M_d(i,:)||_1}{||w||_1} = \frac{M_d(i,:)w^T}{||w||_2^2}$.
\end{center}
This is equivalent to the stationarity conditions for $v \neq \mathbf{0}$, cf. \eqref{statNF1}.
By symmetry, (2) is equivalent to the stationarity conditions for $w$.
\end{proof} \vspace{0.2cm}

Theorem~\ref{th3v} implies that, for $d$ sufficiently large, $B \subset S_d$.
It would be interesting to have the opposite affirmation: for $d$ sufficiently large, any stationary point of \eqref{NF1} corresponds to a maximal biclique of \eqref{MBP}.
Unfortunately, we will see later that this property does not hold.

\subsubsection{Limit points of $S_d$}

However, as $d$ goes to infinity, we are going to show that the points in $S_d$ get closer to feasible solutions of \eqref{MBP}. \vspace{0.1cm}

\begin{lemma} \label{Sdbounded} The set $S_d$ is bounded i.e.\@
$\forall d > 0$, $\forall vw \in S_d$:
\[ ||vw||_2 = ||v||_2||w||_2 \leq \sqrt{|E|}. \]
\end{lemma}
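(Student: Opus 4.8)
The plan is to exploit the fixed-point characterization \eqref{statNF1} of the points of $S_d$ to bound the entries of one factor in terms of the norm of the other, and then sum.

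Fix a rank-one matrix $vw \in S_d$; since $vw$ is a nonzero matrix, both $v$ and $w$ are nonzero, so $\|v\|_2,\|w\|_2 > 0$. Let $K = \{ i \mid v_i > 0 \}$. For every $i \in K$, the first equation of \eqref{statNF1} gives $\|w\|_2^2\, v_i = (M_d w^T)_i = \sum_{j} M_d(i,j)\, w_j$. By the definition \eqref{Md} of $M_d$, we have $M_d(i,j) = 1$ whenever $(i,j)$ is an edge of $G_b$ and $M_d(i,j) = -d < 0$ otherwise; since $w \ge \mathbf{0}$, it follows that $\|w\|_2^2\, v_i \le \sum_{j \,:\, M_b(i,j)=1} w_j$. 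Applying the Cauchy--Schwarz inequality to this sum over the $d_i := \sum_j M_b(i,j)$ nonzero positions of the $i$-th row of $M_b$ yields $\sum_{j : M_b(i,j)=1} w_j \le \sqrt{d_i}\,\|w\|_2$, hence $v_i \le \sqrt{d_i}/\|w\|_2$.

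Squaring and summing over $i \in K$ (the remaining entries of $v$ being zero) gives
\[
\|v\|_2^2 \;=\; \sum_{i \in K} v_i^2 \;\le\; \frac{1}{\|w\|_2^2}\sum_{i \in K} d_i \;\le\; \frac{1}{\|w\|_2^2}\sum_{i=1}^{m} d_i \;=\; \frac{|E|}{\|w\|_2^2},
\]
where we used $\sum_i d_i = \|M_b\|_F^2 = |E|$. Multiplying through by $\|w\|_2^2 > 0$ gives $\|v\|_2^2 \|w\|_2^2 \le |E|$, and since $\|vw\|_2 = \|v\|_2\|w\|_2$ for the rank-one matrix $vw$, we obtain $\|vw\|_2 \le \sqrt{|E|}$, as claimed.

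There is no genuinely difficult step here; the only points requiring a little care are the non-degeneracy needed to divide by $\|w\|_2^2$ (guaranteed because $vw$ is a nonzero rank-one matrix) and the fact that \eqref{statNF1} need only be invoked on the support $K$ of $v$. An alternative, equally elementary route would first combine \emph{both} parts of \eqref{statNF1} to derive the identity $\|v\|_2^2\|w\|_2^2 = v^T M_d w^T$, then bound $v^T M_d w^T \le \sum_{(i,j)\in E} v_i w_j \le \sqrt{|E|}\,\big(\sum_{i,j} v_i^2 w_j^2\big)^{1/2} = \sqrt{|E|}\,\|v\|_2\|w\|_2$ by Cauchy--Schwarz, and divide; I would pick whichever is shorter to typeset.
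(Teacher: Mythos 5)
Your proof is correct and is in substance the same as the paper's: the paper bounds $\|v\|_2 \le \|(M_d)_+ w^T\|_2/\|w\|_2^2 \le \|(M_d)_+\|_F/\|w\|_2 = \sqrt{|E|}/\|w\|_2$ directly from \eqref{statNF1}, which is exactly your row-by-row Cauchy--Schwarz argument aggregated into the matrix-norm inequality $\|A w^T\|_2 \le \|A\|_F \|w\|_2$ together with $\|(M_d)_+\|_F^2 = \|M_b\|_F^2 = |E|$. No gaps; the only difference is presentational.
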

\begin{proof}
For $vw \in S_d$, by \eqref{statNF1},
\begin{displaymath}
||v||_2  =    \Big|\Big| \max \Big( \mathbf{0}, \frac{M_dw^T}{||w||_2^2}\Big)  \Big|\Big|_2
        \leq  \frac{|| \max ( \mathbf{0},M_d ) w^T ||_2 }{||w||_2^2}
				\leq  \frac{|| \max ( \mathbf{0},M_d) ||_F}{|| w ||_2}
        =  \frac{\sqrt{|E|}}{||w||_2}.
\end{displaymath}
\end{proof} 

\begin{lemma} \label{bici}
For $vw \in S_d$, if $M_d(i,j) = -d$ and if $(vw)_{ij} > 0$, then
\[ 0 < v_i < \frac{||v||_1}{d+1}
\quad \textrm{ and } \quad  0 < w_j < \frac{||w||_1}{d+1} .\]
\end{lemma}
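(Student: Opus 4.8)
The plan is to use the stationarity characterization \eqref{statNF1} directly, reading off $v_i$ and $w_j$ and splitting the relevant inner products according to the two possible values $1$ and $-d$ of the entries of $M_d$. First note that $(vw)_{ij} = v_i w_j > 0$ together with $v,w \geq \mathbf 0$ immediately gives $v_i > 0$ and $w_j > 0$, which is the lower-bound part of the statement. Since $w_j > 0$, the maximum defining $w_j$ in \eqref{statNF1} is attained by its nontrivial branch, so $w_j \, ||v||_2^2 = (v^T M_d)_j = \sum_{k} v_k M_d(k,j)$.

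Next I would partition the index set $\{1,\dots,m\}$ into $P = \{k \mid M_d(k,j) = 1\}$ and $N = \{k \mid M_d(k,j) = -d\}$ (these are the only two possibilities, since $M_d$ has entries in $\{1,-d\}$), and write $A = \sum_{k \in P} v_k$, $S = \sum_{k \in N} v_k$. Then $w_j\, ||v||_2^2 = A - dS$, while $||v||_1 = A + S$ because $v \geq \mathbf 0$. Solving this $2 \times 2$ linear system for $S$ yields
\[
S = \frac{||v||_1 - w_j\, ||v||_2^2}{d+1}.
\]
Now $M_d(i,j) = -d$ means $i \in N$, so $v_i \leq S$ (all terms of $S$ are nonnegative); and since $w_j > 0$ and $||v||_2 > 0$ (as $v \neq \mathbf 0$), the numerator satisfies $||v||_1 - w_j\, ||v||_2^2 < ||v||_1$, hence $S < ||v||_1/(d+1)$. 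Combining, $0 < v_i \leq S < ||v||_1/(d+1)$, which is the first inequality.

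The bound on $w_j$ follows by the symmetric argument: since $v_i > 0$, use the other relation in \eqref{statNF1}, $v_i\, ||w||_2^2 = (M_d w^T)_i = \sum_k M_d(i,k) w_k$, split the columns according to whether $M_d(i,k)$ equals $1$ or $-d$, and solve the analogous $2\times 2$ system to get $\sum_{k:\,M_d(i,k)=-d} w_k = (||w||_1 - v_i\, ||w||_2^2)/(d+1)$; since $j$ lies in that index set and $v_i > 0$, $w \neq \mathbf 0$, we conclude $w_j < ||w||_1/(d+1)$. I do not anticipate a genuine obstacle here — the only points requiring care are (i) justifying that the nontrivial branch of each max is the active one (which is exactly where $v_i>0$, resp. $w_j>0$, is used), and (ii) tracking where the inequalities are strict, namely that $v_i\,||w||_2^2$ and $w_j\,||v||_2^2$ are strictly positive, which is what upgrades the $\leq$ coming from $S \geq v_i$ (resp. $\geq w_j$) to the desired strict bound.
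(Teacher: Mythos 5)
Your argument is correct and follows essentially the same route as the paper: both use the active (nontrivial) branch of the stationarity characterization \eqref{statNF1} for the positive entry, split the inner product $v^T M_d(:,j)$ according to whether entries equal $1$ or $-d$, and use positivity of $w_j\,||v||_2^2$ to force $(d+1)v_i < ||v||_1$. The paper just bounds $v^T M_d(:,j) \leq ||v||_1 - (d+1)v_i$ in one line where you solve the equivalent $2\times 2$ system exactly; the content is identical.
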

\begin{proof} By \eqref{statNF1},
\[0 <  w_j ||v||_2^2 = {v^TM_d(:,j)}{} \leq {||v||_1 - (d+1) v_i}{}
\; \Rightarrow \; 0 < v_i < \frac{||v||_1}{d+1}.\]
The same can be shown for $w$ by symmetry.
\end{proof} \vspace{0.2cm}

\begin{theorem} \label{Sdinf}
As $d$ goes to infinity, stationary points of \eqref{NF1} get closer to feasible solutions of \eqref{MBP} i.e. $\forall \epsilon > 0$, $\exists D$ s.t. $\forall d > D$:
\begin{equation} \label{SdF}
\max_{vw \in S_d} \; \min_{v_bw_b \in F} \; ||vw-v_bw_b||_F \; < \; \epsilon.
\end{equation}
\end{theorem}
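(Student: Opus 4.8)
The plan is to attach to every stationary point $vw \in S_d$ an explicit feasible biclique $v_bw_b \in F$ for which $\|vw - v_bw_b\|_F$ is bounded by a function of $d$ alone (given $m$, $n$, $|E|$), uniformly over $S_d$; once this function drops below $\epsilon$ for $d$ large, this is exactly \eqref{SdF}. Given $vw \in S_d$, I would first single out the ``clean'' rows $K_1 = \{\, i \mid M_b(i,j)=1 \text{ whenever } w_j>0 \,\}$ and the ``clean'' columns $L_1 = \{\, j \mid M_b(i,j)=1 \text{ whenever } v_i>0 \,\}$. A short computation with the definition of $M_d$ and the stationarity conditions \eqref{statNF1} shows that $(M_dw^T)_i = \|w\|_1$ for $i \in K_1$, hence $i \in \mathrm{supp}(v)$ and $v_i = \|w\|_1/\|w\|_2^2 =: b$; symmetrically $w_j = \|v\|_1/\|v\|_2^2 =: a$ for every $j \in L_1$. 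In particular $K_1 \times L_1$ contains only edges of $G_b$, so taking $v_b$ and $w_b$ to be the $\{0,1\}$ indicator vectors of $K_1$ and $L_1$ produces a feasible point of \eqref{MBP}, i.e.\ $v_bw_b \in F$, and on the rectangle $K_1 \times L_1$ the matrix $vw$ is the constant $ab$. (For $d > \max(m,n)$ one checks, via Lemma~\ref{bici} exactly as below, that $K_1$ and $L_1$ are nonempty; otherwise $v_bw_b = \mathbf{0} \in F$ and every estimate below still applies.)

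Next I would split
\[ \|vw - v_bw_b\|_F^2 \;=\; |K_1|\,|L_1|\,(ab-1)^2 \;+\; \sum_{(i,j)\notin K_1\times L_1}(v_iw_j)^2 \]
and bound the two pieces separately. For the ``off-rectangle'' sum, any pair with $v_iw_j>0$ lying outside $K_1\times L_1$ has $i\notin K_1$ or $j\notin L_1$; in the first case row $i$ meets a non-edge at some column $j'$ with $w_{j'}>0$, and since $v_i>0$ Lemma~\ref{bici} gives $v_i<\|v\|_1/(d+1)$, and symmetrically $w_j<\|w\|_1/(d+1)$ in the second case. Summing these contributions, and using $\|v\|_1^2\le m\|v\|_2^2$, $\|w\|_1^2\le n\|w\|_2^2$ together with Lemma~\ref{Sdbounded} (so that $\|v\|_2^2\|w\|_2^2 = \|vw\|_F^2 \le |E|$), the off-rectangle sum is at most $(m^2+n^2)|E|/(d+1)^2$, which goes to $0$.

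The decisive step is to show $ab\to 1$. Writing $ab - 1 = (b\|v\|_1 - \|v\|_2^2)/\|v\|_2^2 = \big(\sum_i v_i(b-v_i)\big)/\|v\|_2^2$ and noting that the summand vanishes on $K_1$ (where $v_i=b$), I am left with $\sum_{i\notin K_1}v_i(b-v_i)$, where \eqref{statNF1} forces $0\le v_i<b$ for each such ``dirty'' row (it meets a non-edge inside $\mathrm{supp}(w)$). Bounding $v_i(b-v_i)\le b\,v_i$ and applying Lemma~\ref{bici} to each such $i$ with $v_i>0$ gives $\sum_{i\notin K_1}v_i < m\|v\|_1/(d+1)$, hence the self-referential inequality $0\le ab-1 < \frac{m}{d+1}\,ab$; solving yields $0\le ab-1 < \frac{m}{d+1-m}$ (and $< \frac{n}{d+1-n}$ by symmetry). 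Combining the two estimates,
\[ \|vw - v_bw_b\|_F^2 \;<\; mn\Big(\frac{m}{d+1-m}\Big)^2 + \frac{(m^2+n^2)|E|}{(d+1)^2} \;=:\; \phi(d), \]
with $\phi$ depending only on $m$, $n$, $|E|$ and $\phi(d)\to 0$ as $d\to\infty$. Given $\epsilon>0$ I would then pick $D>\max(m,n)$ with $\phi(d)<\epsilon^2$ for all $d>D$; for any such $d$ and any $vw\in S_d$ this gives $\min_{v_bw_b\in F}\|vw-v_bw_b\|_F\le\sqrt{\phi(d)}<\epsilon$, which is \eqref{SdF}.

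I expect the self-referential estimate in the third paragraph to be the main obstacle: one must balance the exact value $b$ that stationarity imposes on clean rows against the smallness of the dirty rows and columns supplied by Lemma~\ref{bici}, and do so with constants that do not depend on the particular stationary point, so that the resulting bound $\phi(d)$ is uniform over all of $S_d$. The remaining steps — identifying $K_1$ and $L_1$, checking that $K_1\times L_1$ is a biclique, and controlling the off-rectangle mass — are routine bookkeeping on top of Lemmas~\ref{Sdbounded} and~\ref{bici}.
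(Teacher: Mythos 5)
Your proof is correct and follows essentially the same route as the paper's: both isolate the ``clean'' rows and columns (the paper's sets $A$ and $B$, taken after restricting to the supports of $v$ and $w$, are exactly your $K_1$ and $L_1$), use Lemma~\ref{bici} to show all remaining positive entries are $O(1/d)$, and show that the product on the clean rectangle tends to the all-ones matrix. Your write-up is organized somewhat more cleanly --- you avoid the preliminary reduction to $vw>0$ and the normalization $||w||_2=1$, replace the paper's sandwich estimates \eqref{kw}--\eqref{kv} by the self-referential inequality $ab-1<\frac{m}{d+1}\,ab$, and your explicit rate $\phi(d)$ makes the uniformity over $S_d$ transparent --- but the decomposition and the key lemmas are the same.
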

\begin{proof}
Let $vw \in S_d$ and suppose $vw > 0$. W.l.o.g. $||w||_2 = 1$; in fact, if $vw \in S_d$, $\Big(\lambda v \frac{1}{\lambda} w\Big) \in S_d, \forall \lambda > 0$.
Note that Lemma \ref{Sdbounded} implies $||v||_2 \leq \sqrt{|E|}$.
By \eqref{statNF1},
\[ v = M_{d}w^T \quad \textrm{ and } \quad
w = \frac{v^TM_{d}}{||v||_2^2}.\]
Therefore, $(v/||v||_2,w) > 0$ is a pair of singular vectors of $M_{d}$ associated with the singular value $||v||_2 > 0$. If $M_{d} = \mathbf{1}_{m \times n}$, the only  pair of positive singular vectors of $M_d$ is  $\Big( \frac{1}{\sqrt{m}}\mathbf{1}_{m},\frac{1}{\sqrt{n}}\mathbf{1}_{n} \Big)$ so that $vw = M_b$ coincides with a feasible solution of \eqref{MBP}.\\
Otherwise, we define
\begin{equation} \label{AB}
A = \Big\{ i  \;\Big|\; M_d(i,j) = 1, \forall j \Big\} \quad \textrm{ and } \quad
B = \Big\{ j  \;\Big|\; M_d(i,j) = 1, \forall i \Big\},
\end{equation}
and their complements $\bar{A} = \{1,2,\dots,m\} \backslash A$, $\bar{B} = \{1,2,\dots,n\} \backslash B$; hence,
\[ M_d(A,:) = \mathbf{1}_{|A| \times n} \quad \textrm{ and } \quad M_d(:,B) = \mathbf{1}_{m \times |B|}.\]
Using Lemma \ref{bici} and the fact that $||x||_1 \leq \sqrt{n}||x||_2, \forall x \in \mathbb{R}^n$, we get
\begin{equation} \label{lembici}
\mathbf{0} < v(\bar{A}) < \frac{\sqrt{m|E|}}{d+1} \, \mathbf{1}_{|\bar{A}|} \quad \textrm{ and } \quad
\mathbf{0} < w(\bar{B}) < \frac{\sqrt{n}}{d+1} \, \mathbf{1}_{|\bar{B}|}.
\end{equation}
Therefore, since $w \leq \mathbf{1}_{n}$ and $v \leq \sqrt{|E|} \, \mathbf{1}_{m}$, we obtain
\[
||v(\bar{A})w - \mathbf{0}||_F < \frac{1}{d+1} \Big( m \sqrt{n|E|} \Big)
\;\; \textrm{ and } \;\;
||v w(\bar{B})-\mathbf{0}||_F < \frac{1}{d+1} \Big( n \sqrt{m|E|} \Big).
\]
It remains to show that $v(A)w(B)$ coincide with a biclique of the (complete) graph generated by $M_b(A,B) = \mathbf{1}_{|A| \times |B|}$. We distinguish three cases:\\
(1) $A = \emptyset$. \eqref{lembici} implies
$||v||_2 < \frac{1}{d+1}\Big(m\sqrt{m|E|}\Big)$ so that
\begin{equation} \label{wB}
 w(B) 
 = \frac{v^T\mathbf{1}_{m \times |B|}}{||v||_2^2}
 = \frac{||v||_1}{||v||_2^2}\,\mathbf{1}_{|B|}
 \geq 
	 \frac{\sqrt{m}}{||v||_2}\,\mathbf{1}_{|B|} >  \frac{d+1}{m\sqrt{|E|}} \,\mathbf{1}_{|B|}
\end{equation}
which is absurd if $d > m\sqrt{|E|}$ since $||w||_2 = 1$.
\\
(2) $B = \emptyset$. Using \eqref{lembici}, we have
$v(A) = M_d(A,:)w^T = ||w||_1 \mathbf{1}_{|A|} < n \frac{\sqrt{n}}{d+1}$ and then
\[
||v(A)w(B)-\mathbf{0}||_F
< \frac{1}{d+1} \Big( n^2 \sqrt{m}   \Big). 
\]
(3) $A, B \neq \emptyset$. Noting $k_w = \frac{||v||_1}{||v||_2^2}$, Equation \eqref{wB} gives $w(B) = k_w \,\mathbf{1}_{|B|}$.
Therefore,
\begin{equation} \label{kw}
 1 - |\bar{B}| \frac{  \sqrt{n}}{d+1}
 < ||w||_2^2 - ||w(\bar{B})||_2^2 = ||w(B)||_2^2 = |B| k_w^2 \leq ||w||_2^2 = 1,
\end{equation}
Moreover, $v(A) =  \mathbf{1}_{|A| \times m}w^T =  ||w||_1 \mathbf{1}_{|A|}$ so that
\begin{equation} \label{kv}
 |B| k_w
	\leq v(A) = (||w(B)||_1 + ||w(\bar{B})||_1) \mathbf{1}_{|A|}
	< |B| k_w + |\bar{B}| \frac{ \sqrt{n}}{d+1}.
	\end{equation}Finally, combining \eqref{kw} and \eqref{kv} and noting that $k_w \leq 1$ since $||w||_2 = 1$,
\[
\Big( 1 - \frac{|\bar{B}| \sqrt{n}}{d+1} \Big) \, \mathbf{1}_{|A| \times |B|}
< v(A)w(B)
<  \Big( 1 + \frac{|\bar{B}| \sqrt{n}}{d+1} \Big) \, \mathbf{1}_{|A| \times |B|}.
\]
We can conclude that, for $d$ sufficiently large, $vw$ is arbitrarily close to a feasible solution of \eqref{MBP} which corresponds to the biclique $(A,B)$.\\

\noindent Recall we supposed $vw > 0$. If $vw \ngtr 0$, let $(K,L)$ be the indexes defined in \eqref{ST}. The above result holds for $v(K)w(L) > 0$ with the matrix $M_d(K,L)$. For $d$ sufficiently large, $v(K)w(L)$ is then close to a feasible solution of \eqref{MBP} for $M_d(K,L)$. Adding zero to this feasible solution gives a feasible solution for $M_b$.
\end{proof}\vspace{0.1cm}

\begin{example} \label{convSd} Let
\begin{displaymath}
M_d = \left( \begin{array}{cc}
-d & 1   \\
1 & 1   \\\end{array} \right).
\end{displaymath}
Clearly, 
$\left( \begin{array}{cc} 0 & 1 \\ 0 & 1 \end{array}  \right)$ belongs to the set B, i.e.\@ it corresponds to maximal biclique of the graph generated by $M_b$.
By Theorem $\ref{th3v}$, for $d > 1$, it belongs to $S_d$ i.e. $[(0\;1)^T,(0\;1)]$ is stationary points of \eqref{NF1}.\\
For $d > 1$, one can also check that the singular values of $M_d$ are disjoint and that the second pair of singular vectors is positive. Since it is a positive stationary point of the unconstrained problem, it is also a stationary point of \eqref{NF1}.
As $d$ goes to infinity, it must get closer to a biclique of \eqref{MBP} (Theorem~\ref{Sdinf}). Moreover $M_d$ is symmetric so that the right and left singular vectors are equal to each other. Figure \ref{contSd} shows the evolution\footnote{By Wedin's theorem (cf. matrix perturbation theory, see e.g.~\cite{Stew}), singular subspaces of $M_d$ associated with a positive singular value are continuously deformed with respect to $d$.}
of this positive singular vector of $M_d$ with respect to $d$.
It converges to $(0\;1)$ and then the product of the left and right singular vector converges to $\left( \begin{array}{cc} 0 & 0 \\ 0 & 1 \end{array}  \right) \in F$.
\begin{figure}[ht]
\begin{center}
\includegraphics[width=7cm]{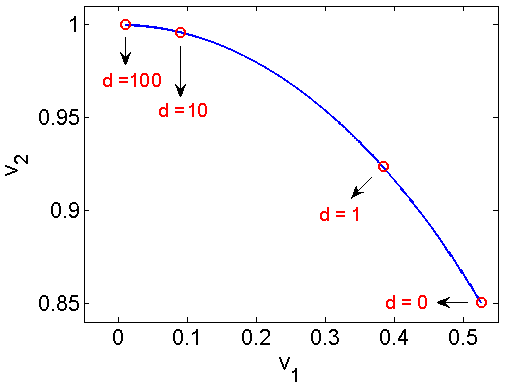}
\caption{Evolution of $(v_1 v_2)^T(v_1 v_2) \, \in \,  S_d$.}
\label{contSd}
\end{center}
\end{figure}
\end{example} \vspace{0.1cm}

\newpage

\subsection{Multiplicative Updates for Nonnegative Factorization}
\label{MUNF}
In this section, the MU of Lee and Seung presented in Section~\ref{LSalgo} to find approximate solutions of \eqref{NMF} are generalized to \eqref{NF}. Other than providing a way of computing approximate solutions of \eqref{NF}, this result will also help us to understand why the updates of Lee and Seung are not very efficient in practice.\\
The Karush-Kuhn-Tucker optimality conditions of the \eqref{NF} problem are the same as for \eqref{NMF} (see Section~\ref{LSalgo}). Of course, any real matrix $M$ can be written as the difference of two nonnegative matrices: $M=P-N$ with $P, N \geq \mathbf{0}$. This can be used to generalize the algorithm of Lee and Seung. In fact, \eqref{LSorigin1} and \eqref{LSorigin2}  become
\begin{eqnarray}
V \circ (VWW^T+ N W^T) & = & V \circ (PW^T) \label{MixVnf}\\
W \circ (V^TVW+ V^T N) & = & W \circ (V^TP) \label{MixWnf}
\end{eqnarray}
and using the same idea as in Section~\ref{LSalgo}, we get the following multiplicative update rules :
\begin{theorem}
\label{NFmultTh}
For $V,W \geq \mathbf{0}$ and $M = P-N$ with $P,N \geq \mathbf{0}$, the cost function
$||M-VW||_F$ is nonincreasing under the following update rules:
\begin{equation}
\label{NFmult}
V \leftarrow V \circ \frac{[P W^T]}{[V W W^T+N W^T]}\,, \qquad  W \leftarrow W \circ \frac{[V^T P]}{[V^T V W+V^T N]}\,.
\end{equation}
\end{theorem}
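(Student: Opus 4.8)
The plan is to mimic the Lee–Seung argument for the standard multiplicative updates, adapted to the splitting $M = P - N$. By symmetry between the two factors (the update for $W$ applied to $M^T = P^T - N^T$ with the roles of $V$ and $W$ swapped is exactly the stated update for $V$), it suffices to prove monotonicity for the update of $V$ with $W$ held fixed. So I would fix $W$ and write $F(V) = \|M - VW\|_F^2 = \|P - N - VW\|_F^2$; the claim reduces to showing $F(V^+) \le F(V)$ where $V^+ = V \circ [PW^T] / [VWW^T + NW^T]$.

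The key step is to construct an auxiliary function $G(V, \tilde V)$ that majorizes $F$, i.e.\@ $G(V,\tilde V) \ge F(V)$ for all feasible $V$ and $G(\tilde V, \tilde V) = F(\tilde V)$, and whose minimizer over $V$ (for fixed $\tilde V$) is precisely the multiplicative update applied to $\tilde V$. Then the standard sandwich $F(V^+) \le G(V^+, V) \le G(V, V) = F(V)$ gives the result. Expanding $F$ row by row (the rows of $V$ decouple since $W$ is fixed), the quadratic term contributing positively is $\mathrm{tr}(W^T V^T V W)$; following Lee–Seung one replaces this term by the separable upper bound obtained from the inequality $x^2/2 \le (a x^2 + b^{-1}... )$ — concretely, using convexity one bounds $(V W W^T)$-type cross terms by diagonal terms with coefficients $(\tilde V W W^T)_{ik}/\tilde V_{ik}$. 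The linear term $-2\,\mathrm{tr}(W^T V^T P) + 2\,\mathrm{tr}(W^T V^T N)$ is already affine in $V$; note $N$ enters with a \emph{positive} sign, so it behaves like part of the "denominator" contribution rather than the "numerator", which is exactly why it lands in the denominator of the update. Setting $\partial G / \partial V_{ik} = 0$ and solving the resulting scalar equation yields $V_{ik}^+ = \tilde V_{ik} \, (PW^T)_{ik} / (\tilde V W W^T + N W^T)_{ik}$, as desired.

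The main obstacle — and the only place requiring genuine care rather than bookkeeping — is verifying that the auxiliary function $G$ is indeed a valid majorizer once the extra $+N W^T$ term is present. In the original NMF proof the denominator is $(\tilde V W W^T)$, which is guaranteed positive as soon as $\tilde V$ has no zero rows and $WW^T$ is suitably nondegenerate; here the denominator is $(\tilde V W W^T + N W^T)$, and one must check this is strictly positive wherever $V_{ik}$ actually appears, so that the update is well-defined and $G$'s Hessian in $V$ is positive definite. This follows because $N \ge \mathbf 0$ only adds a nonnegative quantity, and the term $\tilde V W W^T$ already carries the needed positivity on the relevant support; but the argument should be stated carefully, possibly noting that if some denominator entry vanishes the corresponding $V_{ik}$ can only be $0$ and the update leaves it unchanged. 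The rest is the routine componentwise verification that the quadratic-majorization inequality holds term by term, which I would not expand in full.
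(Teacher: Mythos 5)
Your overall strategy---reduce to the $V$-update by symmetry, decouple the rows of $V$, and build a Lee--Seung-style auxiliary function whose minimizer is the multiplicative update---is exactly the paper's. However, there is a genuine gap at the one step where NF actually differs from NMF. The majorizer you concretely describe keeps the standard Lee--Seung diagonal curvature $(\tilde V WW^T)_{ik}/\tilde V_{ik}$ for the quadratic term and leaves the linear term $+2\,\mathrm{tr}(W^T V^T N)$ untouched. Setting the gradient of that $G$ to zero gives
\[
V_{ik} \;=\; \tilde V_{ik}\,\frac{(PW^T)_{ik}-(NW^T)_{ik}}{(\tilde V WW^T)_{ik}},
\]
which is not the stated update and can be negative. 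The missing idea is that the nonnegative linear term must itself be folded into the curvature of the majorizer: the paper takes
\[
K(\tilde v)=\mathrm{diag}\!\left(\frac{[\tilde v WW^T+nW^T]}{[\tilde v]}\right),
\]
i.e.\ it adds the nonnegative diagonal $\mathrm{diag}([nW^T]/[\tilde v])$ to the Lee--Seung curvature (equivalently, one majorizes the linear piece $2(nW^T)_k v_k$ by $(nW^T)_k (v_k^2+\tilde v_k^2)/\tilde v_k$, with equality at $v=\tilde v$). Only with this enlarged $K$ does the unconstrained minimizer of $G$ come out as $\tilde v \circ [pW^T]/[\tilde v WW^T + nW^T]$, and only then is it automatically nonnegative. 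Relatedly, the condition to verify is not that $G$'s Hessian is positive definite but that $K(\tilde v)-WW^T$ is positive semidefinite (so that $G\ge F$ everywhere); this follows from the Lee--Seung lemma that $\mathrm{diag}([\tilde v WW^T]/[\tilde v])-WW^T\succeq 0$ plus the fact that the added diagonal is nonnegative when $\tilde v>0$ and $N\ge\mathbf{0}$. Your intuition that ``$N$ lands in the denominator'' is correct, but the derivation as written does not produce it.
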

\begin{proof} We only treat the proof for $V$ since the problem is perfectly symmetric.
The cost function can be split into $m$ independent components related to each row of the error matrix, each depending on a specific row of $P$, $N$ and $V$, which we call respectively $p$, $n$ and $v$. Hence, we can treat each row of V separately, and we only have to show that the function
\[ F(v) = \frac{1}{2} ||p-n-vW||_F^2.\]
is nonincreasing under the following update
\begin{equation} \label{cfG}
v_0 \leftarrow v_0 \circ \frac{[pW^T]}{[v_0WW^T + n W^T]}, \; \forall v_0 > 0.
\end{equation}
$F$ is a quadratic function so that
\[ F(v) = F(v_0) + (v-v_0) \nabla F(v_0) + \frac{1}{2} (v-v_0) \nabla^2 F(v_0) (v-v_0)^T, \quad \forall v_0, \]
with $\nabla F(v_0) = (-p+n+v_0W) W^T$ and $\nabla^2 F(v_0) = WW^T$. Let $G$ be a quadratic model of $F$ around $v_0$:
\[ G(v) = F(v_0) + (v-v_0) \nabla F(v_0) + \frac{1}{2} (v-v_0) K(v_0) (v-v_0)^T \]
with $K(v_0) = \textrm{diag} \Big( \frac{[v_0WW^T + n W^T]}{[v_0]} \Big)$. $G$ has the following nice properties (see below):
\begin{itemize}
\item[(1)] $G$ is an upper approximation of $F$ i.e. $G(v) \geq F(v), \forall v$;
\item[(2)] The global minimum of $G(v)$ is nonnegative and given by \eqref{cfG}. \vspace{0.1cm}
\end{itemize}
Therefore, the global minimum of $G$, given by \eqref{cfG}, provides a new iterate which guarantee the monotonicity of $F$. In fact,
\[ F(v_0) = G(v_0) \geq \min_v G(v) = G(v^*) \geq F(v^*). \]
It remains to show that (1) and (2) hold.\\
\emph{(1)} $G(v) \geq F(v)\; \forall v$. This is equivalent to $K(v_0)-WW^T$ positive semidefinite (PSD). Lee and Seung have proved \cite{LS2} that $A =\textrm{diag} \Big(\frac{[v_0WW^T]}{[v_0]}\Big)-WW^T$ is PSD (see also~\cite{ho2}).
Since $B = \textrm{diag} \Big(\frac{[n W^T]}{[v_0]} \Big)$ is a diagonal nonnegative matrix for $v_0 > 0$ and $nW^T \geq 0$, $A+B = K(v_0)-WW^T$ is also PSD. \vspace{0.02cm}

\noindent \emph{(2)} The global minimum of $G$ is given by \eqref{cfG}:
\begin{eqnarray*}
v^* = \textrm{argmin}_v G(v) & = & v_0 - K^{-1}(v_0) \nabla F(v_0) \\
													   & = & v_0 - v_0 \circ \frac{[-pW^T+(v_0WW^T +n W^T)]}{[v_0WW^T + n W^T]} \\
								  					 & = & v_0 \circ \frac{[pW^T]}{[v_0WW^T + n W^T]}.
\end{eqnarray*}

\end{proof} \vspace{0.1cm}

As with standard multiplicative updates, convergence can be guaranteed with a simple modification:
\begin{theorem} \label{NFmultEps} For every constant $\epsilon > 0$ and for $M = P-N$ with $P,N \geq 0$, $||M-VW||_F$ is nonincreasing under
\begin{equation} \label{NFupdateEps}
V \leftarrow \max\Big(\epsilon,V \circ \frac{[P W^T]}{[V W W^T + NW^T]}\Big), \;
W \leftarrow \max\Big(\epsilon,W \circ \frac{[V^T P]}{[V^T V W + V^TN]} \Big)\,
\end{equation}
for any $(V, W) \geq \epsilon$. Moreover, every limit point of this algorithm is a stationary point of the optimization problem \eqref{epsNMF}.
\end{theorem}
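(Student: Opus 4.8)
The statement has the same two-part shape as Theorems~\ref{NFmultTh} and~\ref{cdTh}: first that \eqref{NFupdateEps} does not increase $||M-VW||_F$, then that its limit points are stationary for \eqref{epsNMF}. For the first part I would not redo any computation but instead reuse the auxiliary (upper-bound) function $G$ built in the proof of Theorem~\ref{NFmultTh}. Fix one row and recall that around the current iterate $v_0 \geq \epsilon$ one has a separable strictly convex quadratic model $G$ with diagonal Hessian $K(v_0) = \mathrm{diag}\big([v_0WW^T+nW^T]/[v_0]\big)$ satisfying $G(v_0)=F(v_0)$ and $G(v)\geq F(v)$ for all $v$. The only new remark needed is that the right-hand side of \eqref{NFupdateEps} is \emph{exactly} the minimizer of $G$ over the box $[\epsilon,+\infty)^r$: since $G$ is separable and strictly convex, this constrained minimizer is the coordinatewise projection onto $[\epsilon,+\infty)$ of the unconstrained minimizer, i.e.\ $\max(\epsilon,v^*)$ with $v^*$ the unclipped multiplicative update. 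As $v_0$ is feasible for that box, $F(v_{\mathrm{new}}) \leq G(v_{\mathrm{new}}) \leq G(v_0) = F(v_0)$; summing over rows and arguing symmetrically for $W$ gives monotonicity (and taking $N=\mathbf 0$, $P=M\geq\mathbf 0$ specializes this to Theorem~\ref{epsMU}).

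\textbf{A uniform sufficient-decrease inequality.} The key to the convergence part is that the decrease can be quantified \emph{uniformly} in the iterate. Whenever $W \geq \epsilon$, each diagonal entry of $K(v_0)$ is bounded below by $(WW^T)_{kk} \geq n\epsilon^2$, so $G$ is $n\epsilon^2$-strongly convex no matter how large the current iterate is. Combining $F(v_0)=G(v_0)$, $F(v_{\mathrm{new}})\leq G(v_{\mathrm{new}})$ and the strong-convexity bound at the minimizer of $G$, then summing over rows, yields
\[ ||M-V^{t}W^{t}||_F^2 - ||M-V^{t+1}W^{t}||_F^2 \;\geq\; \tfrac{n\epsilon^2}{2}\,||V^{t+1}-V^{t}||_F^2 ,\]
and symmetrically $\geq \tfrac{m\epsilon^2}{2}\,||W^{t+1}-W^{t}||_F^2$ for the $W$-step. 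Since the objective is nonincreasing and bounded below by $0$, it converges, so the left-hand sides tend to $0$, hence $||V^{t+1}-V^{t}||_F\to 0$ and $||W^{t+1}-W^{t}||_F\to 0$.

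\textbf{From sufficient decrease to stationarity.} Let $(V^*,W^*)$ be a limit point, say $(V^{t_j},W^{t_j})\to(V^*,W^*)$. Because $V^*\geq\epsilon>0$ and $W^*\geq\epsilon>0$, every denominator occurring in \eqref{NFupdateEps} is bounded away from $0$ in a neighborhood of $(V^*,W^*)$, so both update maps are continuous there (the clipping $\max(\epsilon,\cdot)$ preserves continuity). Using $||V^{t_j+1}-V^{t_j}||_F\to 0$ and $||W^{t_j+1}-W^{t_j}||_F\to 0$ and passing to the limit in \eqref{NFupdateEps} shows that $(V^*,W^*)$ is a common fixed point of the $V$- and $W$-update maps. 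It remains to read off the KKT system of \eqref{epsNMF}: at a fixed point one has, componentwise, either $V_{ik}=\epsilon$ and $\big(VWW^T+NW^T-PW^T\big)_{ik}\geq 0$, or $V_{ik}>\epsilon$ and $\big(VWW^T+NW^T-PW^T\big)_{ik}=0$; since $VWW^T+NW^T-PW^T=(VW-M)W^T=\tfrac12\nabla_V||M-VW||_F^2$ (the splitting $M=P-N$ cancels), these are exactly the stationarity conditions for $\min_{V\geq\epsilon,\,W\geq\epsilon}||M-VW||_F^2$, and likewise for $W$.

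\textbf{Main obstacle.} The genuinely delicate points are (i) securing the sufficient-decrease constant \emph{uniformly} over all iterates rather than only along a bounded subsequence — this is precisely what the lower bound $\epsilon$ provides, and is the structural reason the plain updates \eqref{NFmult} need not converge; and (ii) verifying that continuity of the update maps and the ``fixed point $=$ KKT point'' identification survive both the nonsmooth clipping $\max(\epsilon,\cdot)$ and the freedom in the splitting $M=P-N$. Thanks to (i), boundedness of the iterates — the usual source of trouble in such convergence proofs — is not needed here; if one insists on the existence of limit points it can be ensured by a normalization that leaves $VW$ unchanged.
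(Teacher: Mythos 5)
Your proposal is correct and follows essentially the same route as the paper: monotonicity is obtained by reusing the separable majorizing quadratic $G$ from Theorem~\ref{NFmultTh} and observing that its minimizer over the box $[\epsilon,+\infty)^r$ is exactly the clipped multiplicative update, and stationarity is obtained by identifying each limit point as a fixed point of the update maps and reading off the KKT conditions of \eqref{epsNMF}. Your uniform sufficient-decrease inequality (via the lower bound $K(v_0)_{kk}\geq ||W_{k:}||_2^2 \geq n\epsilon^2$) rigorously justifies passing to the limit to obtain the fixed-point equation \eqref{limpoint}, a step the paper asserts without detail, so it is a sound refinement of the same argument rather than a different one.
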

\begin{proof}
We use exactly the same notation as in the proof of Theorem~\ref{NFmult}, so that
\[
F(v_0) = G(v_0) \geq \min_{v\geq \epsilon} G(v)  = G(v^*) \geq F(v^*), \quad v_0 \geq \epsilon
\]
remains valid. By definition, $K(v_0)$ is a diagonal matrix implying that \emph{$G(v)$ is the sum of $r$ independent quadratic terms}, each depending on a single entry of $v$. Therefore,
\[
\textrm{argmin}_{v\geq \epsilon} G(v) = \max\Big(\epsilon,v_0 \circ \frac{[pW^T]}{[v_0WW^T+nW^T]}\Big),
\]
and the monotonicity is proved.\\
Let $(\bar{V},\bar{W})$ be a limit point of a sequence $\{(V^k,W^k)\}$ generated by \eqref{NFupdateEps}.
The monotonicity implies that $\{||M-V^kW^k||_F\}$ converges to $||M-\bar{V}\bar{W}||_F$ since the cost function is bounded from below. Moreover,
\begin{equation} \label{limpoint}
 \bar{V}_{ik} = \max\Big(\epsilon,\alpha_{ik} \; \bar{V}_{ik} \Big), \quad \forall i,k
 \end{equation}
where
\[ \alpha_{ik} = \frac{P_{i:}\bar{W}_{k:}^T}{\bar{V}_{i:} \bar{W} \bar{W}_{k:}^T + N_{i:}\bar{W}_{k:}^T},\]
which is well-defined since $\bar{V}_{i:} \bar{W} \bar{W}_{k:}^T > 0$. One can easily check that the stationarity conditions of \eqref{epsNMF} for $\bar{V}$ are
\begin{displaymath}
\bar{V}_{ik} \geq \epsilon, \quad  \alpha_{ik} \leq 1 \quad \textrm{ and } \quad (\bar{V}_{ik}-\epsilon)\,(\alpha_{ik} - 1)  = 0, \quad \forall i,k.
\end{displaymath}
Finally, by \eqref{limpoint}, we have either $\bar{V}_{ik} = \epsilon$ and $\alpha_{ik} \leq 1$, or $\bar{V}_{ik} > \epsilon$ and $\alpha_{ik} = 1$,$\forall i,k$. The same can be done for $\bar{W}$ by symmetry.
\end{proof} \vspace{0.2cm}

In order to implement the updates \eqref{NFmult}, one has to choose the matrices $P$ and $N$.
It is clear that $\forall P, N \geq \mathbf{0}$
such that $M = P-N$, there exists a matrix $C \geq \mathbf{0}$ such that the two components $P$ and $N$ can be written  $P = M_+ + C$ and $N = M_- + C$. When $C$ goes to infinity, the above updates do not change the matrices $V$ and $W$,
which seems to indicate that smaller values of $C$ are preferable. Indeed, in the case $r=1$, 
one can prove that $C=\mathbf{0}$ is an optimal choice: \vspace{0.1cm}

\begin{theorem} \label{r1opt} $\forall P,N \geq \mathbf{0}$ s.t. $M = P - N$, and $\forall v \in \mathbb{R}^{n}_+, w \in \mathbb{R}^{m}_+$:
\begin{equation} \label{r1opteq}
||M-v_1w||_F \leq ||M-v_2w||_F \leq ||M-vw||_F,
\end{equation}
for
\[ v_1  = v \circ \frac{[M_+ w^T]}{[v w w^T+M_- w^T]} \quad \textrm{ and } \quad v_2  =  v \circ \frac{[P w^T]}{[v w w^T+N w^T]}. \]
\end{theorem}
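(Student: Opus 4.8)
The plan is to deal with the second inequality first. Since every splitting $M = P - N$ with $P, N \ge \mathbf{0}$ can be written as $P = M_+ + C$ and $N = M_- + C$ with $C \ge \mathbf{0}$ (as noted just before the statement), the vector $v_2$ is exactly the result of one application, with $w$ held fixed, of the $V$-update in \eqref{NFmult} for that splitting; hence $\|M - v_2 w\|_F \le \|M - v w\|_F$ is precisely the monotonicity part of Theorem~\ref{NFmultTh} specialized to rank one, and, taking $C = \mathbf{0}$, likewise $\|M - v_1 w\|_F \le \|M - v w\|_F$. So the actual content of the statement is the first inequality $\|M - v_1 w\|_F \le \|M - v_2 w\|_F$, i.e.\ that the splitting $C = \mathbf{0}$ is the best possible one.

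To prove it I would reuse the row-wise decoupling from the proof of Theorem~\ref{NFmultTh}: writing $M_{i:}$ for the $i$-th row of $M$ and $f_i(a) = \|M_{i:} - a w\|_2^2$, one has $\|M - v' w\|_F^2 = \sum_i f_i\big((v')_i\big)$ for any vector $v'$, so it suffices to show $f_i\big((v_1)_i\big) \le f_i\big((v_2)_i\big)$ for each $i$. Fix $i$ and put $a = v_i$, $\alpha = \|w\|_2^2$, $p_0 = (M_+)_{i:} w^T$, $n_0 = (M_-)_{i:} w^T$, $c = C_{i:} w^T$, all nonnegative because $w \ge \mathbf{0}$ and $C \ge \mathbf{0}$; then $(v_1)_i = a p_0 /(\alpha a + n_0)$ and $(v_2)_i = a(p_0 + c)/(\alpha a + n_0 + c)$. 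Rows with $a = 0$ satisfy $(v_1)_i = (v_2)_i = 0$ and contribute equally to both sides, and $w = \mathbf{0}$ is a degenerate case, so we may assume $a > 0$ and $\alpha > 0$. Since $f_i$ is then a strictly convex quadratic with $f_i(x) - f_i(y) = \alpha\big[(x - a^*)^2 - (y - a^*)^2\big]$ and minimizer $a^* = M_{i:}w^T/\alpha = (p_0 - n_0)/\alpha$, the claim reduces to $\big|(v_1)_i - a^*\big| \le \big|(v_2)_i - a^*\big|$.

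The key step is the algebraic identity
\[
(v_2)_i - a^* \;=\; \frac{(n_0 + c)\,(a - a^*)}{\alpha a + n_0 + c},
\]
which follows by putting $(v_2)_i$ and $a^*$ over the common denominator $\alpha(\alpha a + n_0 + c)$ and observing that the numerator collapses to $(n_0 + c)\,\alpha\,(a - a^*)$; with $c = 0$ this gives $(v_1)_i - a^* = n_0(a - a^*)/(\alpha a + n_0)$. Therefore
\[
\big((v_2)_i - a^*\big)^2 - \big((v_1)_i - a^*\big)^2 \;=\; (a - a^*)^2 \Big[ \Big(\tfrac{n_0 + c}{\alpha a + n_0 + c}\Big)^2 - \Big(\tfrac{n_0}{\alpha a + n_0}\Big)^2 \Big] \;\ge\; 0,
\]
since the map $t \mapsto t/(\alpha a + t)$ is nondecreasing on $[0, \infty)$ (as $\alpha a \ge 0$) and $0 \le n_0 \le n_0 + c$, so both bracketed quantities lie in $[0, 1)$ and the one with the larger argument dominates. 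Hence $f_i\big((v_1)_i\big) \le f_i\big((v_2)_i\big)$ for every $i$, and summing over $i$ yields $\|M - v_1 w\|_F \le \|M - v_2 w\|_F$, which closes the chain.

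I expect the only genuine obstacle to be the displayed identity for $(v_2)_i - a^*$; once the factor $a - a^*$ has been isolated, the rest is elementary monotonicity of a one-variable rational function. The only additional care needed is the bookkeeping of the degenerate rows $v_i = 0$ and the well-definedness of the updates, which is why one must assume $w \ne \mathbf{0}$ (equivalently $\alpha > 0$), consistently with the positivity conventions used elsewhere in the paper.
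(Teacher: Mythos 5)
Your proposal is correct, and its skeleton matches the paper's: the second inequality is delegated to Theorem~\ref{NFmultTh}, the problem is decoupled entry-by-entry along $v$, and the comparison is made against the unconstrained minimizer $v_i^* = M_{i:}w^T/\|w\|_2^2$. Where you diverge is in how the key entry-wise comparison is closed. The paper sets $a=(M_+)_{i:}w^T$, $b=(M_-)_{i:}w^T$, $d=(P-M_+)_{i:}w^T$, $e=v_i ww^T$, splits into the cases $v_i^*\geq v_i$ and $v_i^*\leq v_i$, and establishes the sandwich $v_i \leq (v_2)_i \leq (v_1)_i \leq v_i^*$ through a chain of ratio inequalities --- re-invoking the monotonicity theorem once more to place $(v_2)_i$ above $v_i$ --- and then concludes from the ordering along the quadratic. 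Your route instead derives the exact contraction identity $(v_2)_i - a^* = \frac{(n_0+c)(a-a^*)}{\alpha a + n_0 + c}$, which reduces everything to the monotonicity of $t \mapsto t/(\alpha a + t)$ in $t=n_0+c$. This buys you three things: both sign cases are handled at once, no second appeal to Theorem~\ref{NFmultTh} is needed, and the identity makes quantitatively visible \emph{why} smaller $C$ is better (the residual $|v'_i - a^*|$ is a contraction of $|v_i - a^*|$ by a factor increasing in $c$). The paper's version is buying slightly less machinery per line but needs the case split. Your explicit treatment of the degenerate rows $v_i=0$ and of $w=\mathbf{0}$ is also a bit more careful than the paper's blanket assumption $e>0$. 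Both arguments are sound.
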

\begin{proof} The second inequality of \eqref{r1opteq} is a consequence of Theorem~\ref{NFmult}. For the first one, we treat the inequality separately for each entry of $v$ i.e.\@ we prove that
\[ ||M_{i:}-{v_1}_i w||_F \leq ||M_{i:}-{v_2}_iw||_F, \quad \forall i.\]
Let define $v_i^*$ as the optimal solution of the unconstrained problem i.e.
\[ v_i^* =  \textrm{argmin}_{v_i} ||M_{i:}-v_iw||_F = \frac{M_{i:} w^T}{ww^T},\]
and $a,b,d \geq 0$, $e > 0$, as
\[ a = (M_+)_{i:}w^T,\; b = (M_-)_{i:}w^T,\; d = (P - M_+)_{i:}w^T \; \textrm{ and } \; e = v_iww^T. \]
Noting that $P-M_+ = N-M_-$, we have
\[ v_i^* = v_i \, \Big( \frac{a-b}{ e } \Big),
\; {v_1}_i =  v_i \, \Big( \frac{a}{ e + b } \Big) \; \textrm{ and } \;
{v_2}_i =  v_i \, \Big( \frac{a + d }{ e+ b + d}\Big). \]
Suppose $v_i^* \geq v_i$. Therefore,
\[\frac{a-b}{ e } \geq 1 \; \Rightarrow \; a-b-e \geq 0 \; \Rightarrow \; \frac{a-b}{ e } \geq \frac{a}{ e + b }. \]
Moreover, $1 \leq \frac{a+d}{ e + b +d}$  since ${v_2}_i$ is a better solution than $v_i$ (Theorem~\ref{NFmult}). Finally,
\[1 \leq \frac{a+d}{ e + b + d} \leq \frac{a}{ e + b } \leq \frac{a-b}{ e } \quad \Rightarrow \quad v_i \leq {v_2}_i \leq {v_1}_i \leq v_i^*.\]
The case $v_i^* \leq v_i$ is similar.
\end{proof} \vspace{0.2cm}

Unfortunately, this result does not hold for $r>1$. This is even true for nonnegative matrices, i.e.\@ one can improve the effect of a standard
Lee and Seung multiplicative update by using a well-chosen matrix $C$.\\

\begin{example} With the following matrices
\begin{displaymath}
M = \left( \begin{array}{ccc}
     0  &   0 &    1 \\
     0    & 1  &   1\\
     1   &  1  &   0\\\end{array} \right), \;
V = \left( \begin{array}{cc}
     1  &   1\\
     1  &   0\\
     1  &   1\\\end{array} \right),\;
W = \left( \begin{array}{ccc}
     1    & 0  &   0 \\
     1   &  0 &    1\\\end{array} \right) \; \textrm{ and }  \;
C = \left( \begin{array}{ccc}
     0  &   0 &    1 \\
     0    & 0  &   0\\
     1   &  0  &   0\\\end{array} \right),
\end{displaymath}
we have $||M-V'W||_F < ||M-V''W||_F$ where $V'$ (resp. $V''$) is updated following \eqref{NFmult} using $P = M+C$ and $N=C$ (resp. $P = M$ and $N=\mathbf{0}$).
\end{example}
\noindent However, in practice, it seems that the choice of a proper matrix $C$ is nontrivial and cannot accelerate significantly the speed of convergence.

\section{How good are the Multiplicative Updates (MU) of Lee and Seung?} \label{interMU}

In this section, we use Theorem~\ref{NFmultTh} to interpret the multiplicative rules for \eqref{NMF} and show why the HALS algorithm performs much better in practice.

\subsection{An Improved Version of the MU} The aim of the MU is to improve a current solution $(V,W) \geq \mathbf{0}$ by optimizing alternatively $V$ ($W$ fixed), and vice-versa. In order to prove the monotonicity of the MU, $||M-VW||_F$ was shown to be nonincreasing under an update of a single row of $V$ (resp. column of $W$) since the objective function can be split into $m$ (resp. $n$) independent quadratic terms, each depending on the entries of a row of $V$ (resp. column of $W$); cf. proof of Theorem~\ref{NFmultTh}.

However, there is no guarantee, a priori, that the algorithm is also nonincreasing with respect to an individual update of a column of $V$ (resp. row of $W$). In fact, each entry of a column of $V$ (resp. row of $W$) depends on the other entries of the same row (resp. column) in the cost function. The next theorem states that this property actually holds.
\begin{corollary}  \label{corLS} For $V, W \geq \mathbf{0}$, $||M-VW||_F$ is nonincreasing under
\begin{equation} \label{corLSupdt}
V_{:k} \leftarrow V_{:k} \circ \frac{[M W_{k:}^T]}{[VWW_{k:}^T]},
\qquad  W_{k:} \leftarrow W_{k:} \circ \frac{[V_{:k}^T M]}{[V_{:k}^T VW]}, \quad \forall k,
\end{equation}
i.e. under the update of any column of $V$ or any row of $W$ using the MU \eqref{LSupdate}.
\end{corollary}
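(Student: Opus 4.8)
The plan is to recognize the column update as a disguised instance of the rank-one multiplicative update of Theorem~\ref{NFmultTh}, applied to an appropriate residual. Fix an index $k$ and consider minimizing $||M-VW||_F^2$ over the column $V_{:k}$ alone, with every other entry of $V$ and all of $W$ held fixed. Setting $R_k = M - \sum_{i\neq k} V_{:i}W_{i:}$, this is exactly the rank-one problem $\min_{V_{:k}\geq \mathbf{0}} ||R_k - V_{:k}W_{k:}||_F^2$, i.e.\@ a rank-one Nonnegative Factorization problem with data matrix $R_k$, left factor $V_{:k}$ and right factor $W_{k:}$. Since $R_k$ need not be nonnegative, the NMF monotonicity result does not apply directly, but Theorem~\ref{NFmultTh} does, as soon as we fix a decomposition $R_k = P - N$ with $P,N\geq \mathbf{0}$.

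The decisive point is the \emph{choice} of that decomposition. Because we are in the NMF setting $M\geq\mathbf{0}$, and because $\sum_{i\neq k}V_{:i}W_{i:}\geq\mathbf{0}$ (as $V,W\geq\mathbf{0}$), we may take $P = M$ and $N = \sum_{i\neq k}V_{:i}W_{i:}$: both are nonnegative and $P-N = R_k$ as required. With this choice, the update \eqref{NFmult} for the factor $V_{:k}$ of the rank-one problem reads
\[
V_{:k} \leftarrow V_{:k}\circ\frac{[P W_{k:}^T]}{[V_{:k}W_{k:}W_{k:}^T + N W_{k:}^T]}
= V_{:k}\circ\frac{[M W_{k:}^T]}{[V_{:k}W_{k:}W_{k:}^T + \sum_{i\neq k}V_{:i}W_{i:}W_{k:}^T]}
= V_{:k}\circ\frac{[M W_{k:}^T]}{[VWW_{k:}^T]},
\]
which is precisely \eqref{corLSupdt}. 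Theorem~\ref{NFmultTh} then guarantees that $||R_k - V_{:k}W_{k:}||_F = ||M-VW||_F$ is nonincreasing under this update. The claim for a row $W_{k:}$ of $W$ follows identically after the substitution $M \leftrightarrow M^T$, $V \leftrightarrow W^T$, which swaps the two update rules.

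So the corollary is essentially bookkeeping once the right residual and decomposition have been identified. I expect the only real subtlety to be justifying the substitution $P=M$, $N=\sum_{i\neq k}V_{:i}W_{i:}$: this relies exactly on $M$ being nonnegative, so that the ``free'' additive term $C$ in the general decomposition $P = M_+ + C$, $N = M_- + C$ of $R_k$ can be taken large enough to absorb all of $\sum_{i\neq k}V_{:i}W_{i:}$ while keeping both parts nonnegative. (As in the original multiplicative-update analysis, one tacitly assumes the denominator $VWW_{k:}^T$ is positive so that the update is well-defined.) There is no estimate or hard inequality to establish here.
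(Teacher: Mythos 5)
Your proof is correct and follows exactly the paper's own argument: the paper likewise invokes Theorem~\ref{NFmultTh} with the decomposition $P = M$, $N = \sum_{i\neq k}V_{:i}W_{i:}$ of the residual $R_k$, so that the general update \eqref{NFmult} collapses to \eqref{corLSupdt}. Your write-up simply spells out the verification that the denominators match, which the paper leaves implicit.
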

\begin{proof} This is a consequence of Theorem~\ref{NFmultTh} using $P = M$ and $N = \sum_{i\neq k} V_{:i} W_{i:}$. \\ In fact, $||M-VW||_F = ||(M - \sum_{i\neq k} V_{:i} W_{i:}) - V_{:k} W_{k:}||_F.$
\end{proof} \vspace{0.2cm}

Corollary \ref{corLS} sheds light on a very interesting fact: the multiplicative updates are also trying to optimize alternatively the columns of $V$ and the rows of $W$ using a specific cyclic order: first, the columns of $V$ and then the rows of $W$.
We can now point out two ways of improving the MU:
\begin{enumerate}
\item When updating a column of $V$ (resp. a row of $W$), the columns (resp. rows) already updated are not taken into account: the algorithm uses their old values;
\item The multiplicative updates are not optimal: $P \neq M_+$ and $N \neq M_-$ (cf. Theorem~\ref{r1opt}). Moreover, there is actually a closed-form solution for these subproblems (cf. HALS algorithm, Section~\ref{secHALS}).
\end{enumerate}
Therefore, using Theorem~\ref{r1opt}, we have the following new improved updates
\begin{corollary} \label{NFr1} For $V, W \geq \mathbf{0}$, $||M-VW||_F$ is nonincreasing under
\begin{equation} \label{corNF}
V_{:k} \leftarrow V_{:k} \circ \frac{[(R_{k})_+ W_{k:}^T]}{[V_{:k} W_{k:} W_{k:}^T+(R_{k})_- W_{k:}^T]}, \quad
W_{k:} \leftarrow W_{k:} \circ \frac{[V_{:k}^T (R_{k})_+]}{[V_{:k}^T V_{:k} W_{k:}+V_{:k}^T (R_{k})_-]}, \; \forall k,
\end{equation}
with $R_k = M - \sum_{i\neq k} V_{:i} W_{i:}$. Moreover, the updates \eqref{corNF} perform better than the updates \eqref{corLSupdt}, but worse than the updates (\ref{gilVcol1}-\ref{gilVcol}) which are optimal.
\end{corollary}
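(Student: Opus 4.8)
The plan is to derive all three assertions from results already proved for the rank-one case, applied to the subproblem in which every factor other than $V_{:k}$ and $W_{k:}$ is held fixed. The starting point is the identity $||M-VW||_F = ||R_k - V_{:k}W_{k:}||_F$ with $R_k = M - \sum_{i\neq k}V_{:i}W_{i:}$; since $R_k$ involves only the factors with index different from $k$, it is left unchanged by either update in \eqref{corNF}.

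For the monotonicity, I would apply Theorem~\ref{NFmultTh} to the rank-one problem of approximating $R_k$ by $vw$ (with $v=V_{:k}$, $w=W_{k:}$) using the splitting $R_k=(R_k)_+-(R_k)_-$. Substituting $P=(R_k)_+$ and $N=(R_k)_-$ into the update rules \eqref{NFmult} reproduces exactly the numerators and denominators appearing in \eqref{corNF}, so Theorem~\ref{NFmultTh} gives that $||R_k-V_{:k}W_{k:}||_F$, hence $||M-VW||_F$, does not increase under the $V_{:k}$ step and --- by the symmetric half of the theorem, applied with the same $R_k$ --- under the subsequent $W_{k:}$ step. Sweeping over all $k$ then yields the monotonicity of a full pass.

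For the comparison with the Lee--Seung updates \eqref{corLSupdt}, I would observe that \eqref{corLSupdt} is itself the instance of \eqref{NFmult} for the same rank-one subproblem obtained from the other admissible splitting $R_k=P-N$ with $P=M$ and $N=\sum_{i\neq k}V_{:i}W_{i:}$, both nonnegative since $M\geq\mathbf{0}$ and $V,W\geq\mathbf{0}$; checking that this splitting produces $MW_{k:}^T$ and $VWW_{k:}^T$ as numerator and denominator is a one-line computation. Theorem~\ref{r1opt}, which asserts that the splitting $P=M_+$, $N=M_-$ yields the new rank-one iterate with the smallest residual among all admissible splittings, then shows at once that a $V_{:k}$ update via \eqref{corNF} decreases the objective at least as much as the corresponding update via \eqref{corLSupdt}, and likewise for a $W_{k:}$ update by the transpose symmetry of the problem. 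For the comparison with HALS, the rule \eqref{gilVcol1} returns, by construction, the exact minimizer of $||R_k-V_{:k}W_{k:}||_F$ over $V_{:k}\geq\mathbf{0}$, so the feasible output of the $V_{:k}$ update in \eqref{corNF} gives a residual no smaller than that minimum; symmetrically for $W_{k:}$ via \eqref{gilVcol}. Hence \eqref{corNF} cannot outperform HALS. I do not expect a genuine obstacle: the only care required is bookkeeping --- reading ``better''/``worse'' update by update (same column or row, same frozen residual $R_k$) and verifying the one-line identification of \eqref{corLSupdt} with the $P=M$, $N=\sum_{i\neq k}V_{:i}W_{i:}$ instance of \eqref{NFmult}.
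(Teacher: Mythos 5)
Your proposal is correct and follows essentially the same route as the paper: the paper's proof likewise invokes Theorem~\ref{NFmultTh} and Theorem~\ref{r1opt} with $P=(R_k)_+$, $N=(R_k)_-$ together with the identity $||M-VW||_F=||R_k-V_{:k}W_{k:}||_F$, the Lee--Seung comparison resting on the identification of \eqref{corLSupdt} with the splitting $P=M$, $N=\sum_{i\neq k}V_{:i}W_{i:}$ (as in Corollary~\ref{corLS}) and the HALS comparison on the optimality of the closed-form rules (\ref{gilVcol1}--\ref{gilVcol}). You have merely spelled out the bookkeeping that the paper leaves implicit.
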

\begin{proof} This is a consequence of Theorem~\ref{NFmultTh} and \ref{r1opt} using $P = (R_k)_+$ and $N = (R_k)_-$.\\ In fact, $||M-VW||_F = ||R_k - V_{:k} W_{k:}||_F$.
\end{proof}

\subsection{An example}

Figure \ref{cbcl} shows an example of the behavior of the different algorithms: the original MU (Section~\ref{LSalgo}), the improved version (Corollary \ref{NFr1}) and the \emph{optimal} HALS method (Section~\ref{secHALS}). The test was carried out on a commonly used data set for NMF: the cbcl face database\footnote{CBCL Face Database $\#1$, MIT Center For Biological and Computation Learning. \\ Available at $http$:$//cbcl.mit.edu/cbcl/software$-$datasets/FaceData2.html$.}; 2429 faces (columns) consisting each of $19 \times 19$ pixels (rows) for which we set $r=40$ and we used the same \emph{scaled} (see Remark~\ref{scaled} below) random initialization and the same cyclic order (same as the MU i.e.\@ first the columns of $V$ then the rows of $W$) for the three algorithms.
\begin{figure}[ht]
\begin{center}
\includegraphics[width=7cm]{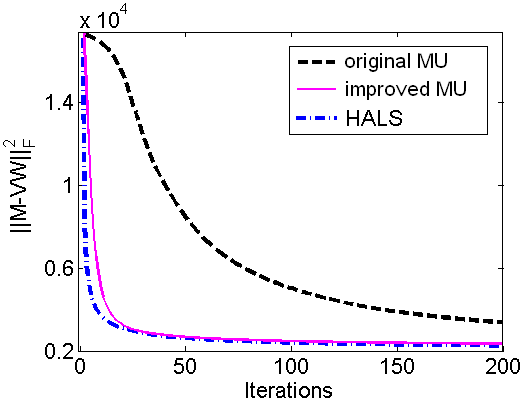}
\caption{Comparison of the MU of Lee and Seung \eqref{LSupdate}, the optimal rank-one NF multiplicative updates \eqref{corNF} and the HALS (\ref{gilVcol1}-\ref{gilVcol}) applied to the cbcl face database.}
\label{cbcl}
\end{center}
\end{figure}
We observe that the MU converges significantly less rapidly than the two other algorithms. There do not seem to be good reasons to use either the MU or the method of Corollary \ref{NFr1} since there is a closed-form solution (\ref{gilVcol1}-\ref{gilVcol}) for the corresponding subproblems.

Finally, the HALS algorithm has the same computational complexity \cite{diep} and performs provably much better than the popular multiplicative updates of Lee and Seung.
Of course, because of the NP-hardness of \eqref{NMF} and the existence of numerous locally optimal solutions, it is not possible to give a theoretical guarantee that HALS will converge to a better solution than the MU: although its iterations are \emph{locally} more efficient, they could still end up at a worse local optimum.
\vspace{0.2cm}

\begin{remark} \label{power}
For $r=1$, one can check that the three algorithms above are equivalent for \eqref{NMF}. Moreover, they correspond to the power method \cite{Gil} which converges to the optimal rank-one solution, given that it is initialized with a vector which is not perpendicular to the singular vector corresponding to the maximum singular value.
\end{remark} \vspace{0.1cm}

\begin{remark} \label{scaled}
We say that $(V,W)$ is scaled if the optimal solution to the problem
\begin{equation} \label{sca}
\min_{\alpha \in \mathbb{R}}||M-\alpha VW||_F
\end{equation}
is equal to 1. Obviously, any stationary point is scaled~; the next Theorem is an extension of a result of Ho et al. \cite{ho}.
\end{remark}
\newpage
\begin{theorem} \label{scth}
The following statements are equivalent
\begin{itemize}
\item[(1)] $(V,W)$ is scaled;
\item[(2)] $VW$ is on the boundary of $\mathcal{B}\Big(\frac{M}{2},\frac{1}{2}||M||_F\Big)$, the ball centered at $\frac{M}{2}$ of radius $\frac{1}{2}||M||_F$; \vspace{0.05cm}
\item[(3)] $||M - VW||_F^2 = ||M||_F^2 - ||VW||_F^2$ (and then $||M||_F^2 \geq ||VW||_F^2$).
\end{itemize}
\end{theorem}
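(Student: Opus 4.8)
The plan is to reduce all three statements to a single scalar identity. Introduce the Frobenius inner product $\langle A,B\rangle = \sum_{ij} A_{ij}B_{ij}$, so that $||A||_F^2 = \langle A,A\rangle$, and consider the one-variable function
\[ f(\alpha) = ||M-\alpha VW||_F^2 = ||M||_F^2 - 2\alpha\,\langle M, VW\rangle + \alpha^2\,||VW||_F^2, \]
which is a convex quadratic in $\alpha$. First I would treat the nondegenerate case $VW \neq \mathbf{0}$: then $f$ is strictly convex with unique minimizer $\alpha^\ast = \langle M, VW\rangle / ||VW||_F^2$, so $(V,W)$ is scaled (i.e.\ $\alpha^\ast = 1$) if and only if
\[ \langle M, VW\rangle = ||VW||_F^2. \]
I will refer to this as the \emph{scaling identity}; the whole proof then amounts to checking that (2) and (3) each say exactly this.

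For (3), substitute $\alpha = 1$ into the expansion of $f$: $||M-VW||_F^2 = ||M||_F^2 - 2\langle M,VW\rangle + ||VW||_F^2$, so statement (3) holds iff $-2\langle M,VW\rangle + ||VW||_F^2 = -||VW||_F^2$, which rearranges to the scaling identity; the parenthetical inequality $||M||_F^2 \geq ||VW||_F^2$ is then immediate from $||M-VW||_F^2 \geq 0$. For (2), expand
\[ \Big|\Big|VW - \tfrac{M}{2}\Big|\Big|_F^2 = ||VW||_F^2 - \langle M, VW\rangle + \tfrac14||M||_F^2, \]
so $VW$ lies on the sphere of radius $\tfrac12||M||_F$ about $\tfrac{M}{2}$ iff $||VW||_F^2 = \langle M, VW\rangle$, again the scaling identity. (Geometrically this is just the Thales-circle fact that $VW$ lies on the sphere with diameter from $\mathbf{0}$ to $M$ precisely when $\langle VW,\, VW - M\rangle = 0$; I may include this remark for intuition.) This closes the chain $(1)\Leftrightarrow(2)\Leftrightarrow(3)$.

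Finally I would dispose of the degenerate case $VW = \mathbf{0}$ separately: here $f$ is constant, so $\alpha = 1$ is an optimal solution and (1) holds, while (2) reads $||{-M/2}||_F = \tfrac12||M||_F$ and (3) reads $||M||_F^2 = ||M||_F^2$, both true — so the equivalence remains valid (vacuously). I do not anticipate any genuine obstacle: the argument is elementary algebra with the Frobenius norm. The only point needing a moment's care is that the notion ``scaled'' be well posed, which is exactly why it is worth isolating the case $VW \neq \mathbf{0}$, where the minimizer $\alpha^\ast$ is unique, before reading off the equivalences.
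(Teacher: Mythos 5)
Your proof is correct and follows essentially the same route as the paper's: both reduce all three statements to the single identity $\langle M, VW\rangle = \langle VW, VW\rangle$ obtained from the closed-form minimizer of $\alpha \mapsto \|M-\alpha VW\|_F^2$, and then verify that (2) and (3) each restate it. The only (minor) added value is your explicit treatment of the degenerate case $VW=\mathbf{0}$, where the paper's closed-form expression for $\alpha$ is undefined; this is a reasonable touch of care but does not change the argument.
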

\begin{proof} The solution of \eqref{sca} can be written in the following closed form
\begin{equation}
\alpha = \frac{\left\langle M,VW\right\rangle}{\left\langle VW,VW\right\rangle},
\end{equation}
where $\left\langle A,B\right\rangle = \sum_{ij} A_{ij}B_{ij} = trace(AB^T)$ is the scalar product associated with the Frobenius norm. Since $\alpha = 1$,
\begin{eqnarray*}
\left\langle VW-M,VW\right\rangle & = & 0 \\
\left\langle VW-M,VW\right\rangle + \left\langle \frac{M}{2},\frac{M}{2}\right\rangle
& =& \left\langle \frac{M}{2},\frac{M}{2}\right\rangle \\
\left\langle\frac{M}{2}-VW,\frac{M}{2}-VW\right\rangle & =& \left\langle  \frac{M}{2},\frac{M}{2}\right\rangle,
\end{eqnarray*}
so that (1) and (2) are equivalent. For the equivalence of (1) and (3), we have
\begin{eqnarray*}
\left\langle M-VW,M-VW\right\rangle & =& ||M||_F^2 - 2 \left\langle M,VW \right\rangle + ||VW||_F^2 \\
& =& ||M||_F^2 - ||VW||_F^2 - 2\Big( \left\langle M,VW \right\rangle -  \left\langle VW,VW \right\rangle \Big).\\
& =& ||M||_F^2 - ||VW||_F^2
\end{eqnarray*}
if and only if $\left\langle M,VW \right\rangle = \left\langle VW,VW\right\rangle$.
\end{proof} \vspace{0.2cm}

\noindent Theorem~\ref{scth} can be used as follows: when you compute the error of the current solution, you can scale it without further computational cost. In fact,
\begin{eqnarray}
||M-VW||_F^2 &= & \left\langle M-VW,M-VW\right\rangle \nonumber \\
& =& ||M||_F^2 - 2 \left\langle M,VW \right\rangle + ||VW||_F^2. \label{error}
\end{eqnarray}
Note that the third term of \eqref{error} can be computed in $O(\max(m,n)r^2)$ operations since
\begin{eqnarray*}
||VW||_F^2 & = & \sum_{ij}  \Big( \sum_k  V_{ik}^2 W_{kj}^2 \Big) + 2 \sum_{ij}  \Big( \sum_{k \neq l} V_{ik} W_{kj} V_{il} W_{lj} \Big) \\
		       & = & \sum_k \Big(\sum_{i} V_{ik}^2\Big) \Big(\sum_j W_{kj}^2\Big) + 2 \sum_{k \neq l} \Big(\sum_{i} V_{ik} V_{il}\Big)   \Big(\sum_j  W_{kj} W_{lj}  \Big) \\
		       & = & ||(V^TV) \circ (WW^T)||_1
\end{eqnarray*}
where $||A||_1 = \sum_{ij} |A_{ij}|$.
This is especially interesting for sparse matrices since only a small number of the entries of $VW$ (which could be dense) need to be computed to evaluate the second term of \eqref{error}.

\section{Biclique Finding Algorithm} \label{mbfa}

In this section, an algorithm for the maximum edge biclique problem whose main iteration requires $O(|E|)$ operations is presented. It is based on the multiplicative updates for Nonnegative Factorization and the strong relation between these two problems (Theorems \ref{thp}, \ref{th3v} and \ref{Sdinf}). We compare the results with other algorithms with iterates requiring $O(|E|)$ operations using the DIMACS database and random graphs.

\subsection{Description}

For $d$ sufficiently large, stationary points of \eqref{NF1} are close to bicliques of \eqref{MBP} (Theorem \ref{Sdinf}). Moreover, the two problems have the same cost function. One could then think of applying an algorithm that finds stationary points of \eqref{NF1} in order to localize a large biclique of the graph generated by $M_b$.
This is the idea of Algorithm \ref{MBFA} using the multiplicative updates \eqref{NFmult}
with
\[ P = (M_d)_+ = M_b \quad \textrm{ and } \quad N = (M_d)_- = d(\mathbf{1}_{m \times n} - M_b). \]
A priori, it is not clear what value $d$ should take. Following the spirit of homotopy methods, we chose to start the algorithm with a small value of $d$ and then to increase it until the algorithm converges to a biclique of \eqref{MBP}.
\algsetup{indent=2em}
\begin{algorithm}[h!]
\caption{Biclique Finding Algorithm in $O(|E|)$ operations}\label{MBFA}
\begin{algorithmic}[1]
\REQUIRE $M_b \in \{0,1\}^{m \times n}$, $v \in \mathbb{R}^{m}_{++}$, $w \in \mathbb{R}^{n}_{++}$, $d = d_0 > 0$, $\alpha > 1$.
\medskip
\FOR {$k = 1, 2, \dots$}
\STATE
\begin{eqnarray}
 v  & \leftarrow &  v^{} \, \circ \frac{[M_b{w^{}}^T]}{[v^{}||w^{}||_2^2+d (\mathbf{1}_{m} ||w^{}||_1 - M_b{w^{}}^T)]}   \label{a} \\
 w & \leftarrow & w^{} \circ \frac{[{v}^TM_b]}{[||v^{}||_2^2 w^{}+d (\mathbf{1}_{n} ||v^{}||_1 - {v^{}}^TM_b)]}  \label{b} \\
 d \; & = & \, \alpha d \nonumber
 \end{eqnarray}
\ENDFOR
\end{algorithmic}
\end{algorithm}

We observed that initial value of $d$ should not be chosen too large: otherwise, the algorithm often converges to the trivial solution: the empty biclique. In fact, in that case, the denominators in \eqref{a} and \eqref{b} will be large, even during the initial steps of the algorithm, and the solution is then forced to converge to zero. Moreover, since the denominators in \eqref{a} and \eqref{b} depend on the graph density, the denser the graph is, the greater $d_0$ can be chosen and vice versa. On the other hand, since our algorithm is equivalent to the power method for $d=0$ (cf. Remark \ref{power}), if $d_0$ is chosen too small, it will converge to the same solution: the one initialized with the best rank-one approximation of $M_b$.

For the stopping criterion, one could, for example, wait until the rounding of $vw$ coincides with a feasible solution of \eqref{MBP}.\\

\subsection{Other Algorithms in $O(|E|)$ operations}

We briefly present here two other algorithms to find maximal bicliques using $O(|E|)$ operations per iteration.

\subsubsection{Motzkin-Strauss Formalism}

In \cite{Ding}, the generalized Motzkin-Strauss formalism for cliques is extended to bicliques by defining the optimization problem
\begin{displaymath}
\max_{\mathbf{x} \in F_x^{\alpha}, \mathbf{y} \in F_y^{\beta}}  \mathbf{x}^T M_b \, \mathbf{y}
\end{displaymath}
where
$F_x^{\alpha} = \{ x \in \mathbb{R}^n_+ | \sum_{i=1}^n x_i^{\alpha} = 1\}$,
$F_y^{\beta} = \{ y \in \mathbb{R}^n_+ | \sum_{i=1}^n y_i^{\beta} = 1\}$ and $\alpha, \beta \ll 1$.\\
Nonincreasing multiplicative updates for this problem are then provided:
\begin{displaymath}
\mathbf{x}  \leftarrow  \Big( \mathbf{x} \circ \frac{M_b \, \mathbf{y}}{\mathbf{x}^T M_b \, \mathbf{y}} \Big)^{\frac{1}{\alpha}}, \quad
\mathbf{y}  \leftarrow  \Big( \mathbf{y} \circ \frac{M_b^T \mathbf{x}}{\mathbf{x}^T M_b \, \mathbf{y}} \Big)^{\frac{1}{\beta}}
\end{displaymath}
This algorithm does not necessarily converge to a biclique: if $\alpha$ and $\beta$ are not sufficiently small, it may converge to a dense bipartite subgraph (a bicluster). In fact, for $\alpha=\beta=2$, it converges to an optimal rank-one solution of the unconstrained problem as our algorithm does for $d=0$. In \cite{Ding}, it is suggested to use $\alpha$ and $\beta$ around 1.05. Finally, $\alpha \neq \beta$ will favor one side of the biclique. We will use $\alpha = \beta$.

\subsubsection{Greedy Heuristic}

The simplest heuristic one can imagine is to add, at each step, a vertex which is connected to most vertices in the other side of the bipartite graph. Each time a vertex is selected, the next choices are restricted in order to get a biclique eventually: the vertices which are not connected to the one you have just chosen are deleted. The procedure is repeated on the remaining graph until you get a biclique. One can check that this produces a maximal biclique.

\subsection{Results} \label{results}

We first present some results for graphs from the DIMACS graph dataset\footnote{$ftp$:$//dimacs.rutgers.edu/pub/challenge/graph/benchmarks/clique$.}.
We extracted bicliques in those (not bipartite) graphs using the preceding algorithms.
We performed 100 runs, 200 iterations each, for the two algorithms with the same initializations.
We tried to choose appropriate parameters\footnote{For the M.-S. algorithm: the alternative choices of parameters $\alpha= 1.005$ and $\alpha= 1.05$ for the DIMACS graphs and $\alpha= 1.005$ and $\alpha= 1.1$ for the random graphs were tested and all gave worse results. Small changes to the parameters of the Mult.\@ algorithm led to similar results, so that it seems less sensitive to the choice of its parameters than the M.-S. algorithm.} for both algorithms. Table \ref{tableDIMACS} displays the cardinality of the biclique extracted by the different algorithms.

\noindent Table \ref{tableRand} shows the results for random graphs: we have generated randomly 100 graphs with 100 vertices for different densities (the probability of an edge to belong to the graph is equal to the density). The average numbers of edges in the solutions for the different algorithms are displayed for each density. We kept the same configuration as for the DIMACS graphs (same initializations, 100 runs for each graph, 200 iterations).
\normalsize It seems that the multiplicative updates generates, in general, better solutions, especially when dealing with dense graphs. The algorithm based on the Motzkin-Strauss formalism seems less efficient and is more sensitive to the choice of its parameters.

\scriptsize
\begin{center}
\begin{table}[h!]
\begin{center}
\begin{tabular}{|c|c|c|c|c|}
\hline
& size  & Greedy      &  M.-S.   & Mult.   \\
& (m) & & ($\alpha = 1.01$) & ($d=1$, $\alpha=1.1$) \\
\hline
\begin{tabular}{c} \\ham62\\ ham64\\ ham82\\ ham84\\ john824\\ john844\\ john1624\\ john3224\\ MANN a9\\ MANN a27
\end{tabular}
&
\begin{tabular}{c} \\64\\ 64\\ 256\\ 256\\ 28\\ 70\\ 120\\ 496\\ 45\\378
\end{tabular}
&
\begin{tabular}{c} \\304\\ 42\\ 4672\\ 440\\ 36\\ 182\\ 784\\ 14400\\ 289\\28728
\end{tabular}
&
\begin{tabular}{c|c} mean & best \\ \hline 157 & 225\\ 21 & 36 \\ 2839 & 3920\\ 226 & 506\\ 26 & 36\\ 132 & 225\\ 457 & 700\\ 6294 & 9129 \\ 272 & 342\\15946 & 28875
\end{tabular}
&
\begin{tabular}{c|c} mean & best \\\hline 269 & 320\\ 37 & 42 \\ 4569  &  4770\\ 830  & 1015\\ 28 & 36\\ 220 & 225\\
514 & 675\\ 8722 & 9108 \\ 342 & 342\\30800 & 30800 \\
\end{tabular}\\
\hline
\end{tabular}
\end{center}
\caption{Solutions for DIMACS data: number of edges in the bicliques.}
\label{tableDIMACS}
\end{table}
\end{center}

\scriptsize
\begin{center}
\begin{table}[h!]
\begin{center}
\begin{tabular}{|c|c|c|c|c|}
\hline
density	& Greedy      &  M.-S.  &  M.-S.  & Mult.    \\
 & & ($\alpha = 1.01$) & ($\alpha = 1.05$) & ($d=1$, $\alpha=1.1$)\\
\hline
\begin{tabular}{c} \\0.1\\ 0.2\\ 0.3\\ 0.4\\ 0.5\\ 0.6\\ 0.7\\ 0.8\\ 0.9
\end{tabular}
&
\begin{tabular}{c} \\8.9\\ 15.8\\ 24.2\\ 37.7\\ 58.8\\ 92.5\\ 158.4\\ 311.0\\ 806.8
\end{tabular}
&
\begin{tabular}{c|c} mean & best \\ \hline 10.4 & 18.9\\ 14.5 & 29.8 \\ 18.9 & 38.7\\ 24.0 & 51.2\\ 31.7 & 69.0\\ 43.4 & 93.7\\ 63.6 & 133.4\\ 106.0 & 207.1 \\ 241.9 & 431.3
\end{tabular}
&
\begin{tabular}{c|c} mean & best \\ \hline 14.4 & 19.2\\ 23.7 & 31.5 \\ 33.8 & 43.4\\ 46.9 & 61.3\\ 65.8 & 86.7\\ 90.5 & 127.9\\ 117.9 & 190.0\\ 154.4 & 261.5 \\ 88.1 & 235.4
\end{tabular}
&
\begin{tabular}{c|c} mean & best \\ \hline 14.4 & 19.2\\ 23.9 & 31.5 \\ 34.1 & 43.3\\ 47.0 & 61.0\\ 67.6 & 87.0\\ 101.7 & 127.8\\ 172.2 & 202.4\\ 328.0 & 342.3 \\ 828.1 & 828.1 \\
\end{tabular}  \\ \hline
\end{tabular}
\end{center}
\caption{Solutions for random graphs: average number of edges in the bicliques.
}
\label{tableRand}
\end{table}
\end{center}
\normalsize

\begin{remark} Algorithm \ref{MBFA} enjoys some flexibility:
\begin{itemize}
\item It is applicable to non-binary matrices i.e.\@ weighted graphs. 
\item It is possible to favor one side of the biclique. In fact, the multiplicative updates for NF can be adapted using the same developments as in Section \ref{MUNF} to cost functions with regularization terms, e.g.
\begin{displaymath}
\min_{v,w \geq 0} \quad ||M-vw||_F^2 + \alpha ||v||_2^2 + \beta ||w||_2^2.
\end{displaymath}
\item If $d$ is kept sufficiently small, for example replacing $d=\alpha d$ by $d = \min(\alpha d, d_m)$ for some $d_m > 0$, there is no guarantee that the algorithm will converge to a biclique.
However, the negative entries in $M_d$ will enforce the corresponding entries of the solutions of \eqref{NF1} to be small (recall that Theorem~\ref{Sdinf} states that, for $d$ sufficiently large, they will be equal to zero). Therefore, by rounding these solutions, instead of a biclique, one gets a dense submatrix of $M_b$ \@ i.e. a \emph{bicluster}. Algorithm~\ref{MBFA} can then be used as a {biclustering algorithm}. The density of the corresponding submatrix will depend on the choice of $d_m$. Table \ref{tableClass} gives an example of such behavior.
\begin{center}
\begin{table}[h!]
\begin{center}
\begin{tabular}{|c|c|c|c|c|c|c|}
\hline
$d_m$     & 0.01 & 0.05 & 0.1 & 0.5 & 1 & 1.5 \\ \hline
size    & 5412 & 4428 & 2952 & 1073 & 595 & 539\\ \hline
density & $29\%$ & $31\%$ & $35\%$ & $42\%$ & $51\%$ & $52\%$\\
\hline
\end{tabular}
\end{center}
\caption{Biclusters for the 'classic' text mining dataset (7094 texts and 41681 words with more than $99.9\%$ of entries equal to zero) with parameters $d_0 = 10^{-5}, \alpha = 1.025, \textrm{maxiter} = 500$.}
\label{tableClass}
\end{table}
\end{center}
\end{itemize}
\end{remark}

\section{Conclusion}

We have introduced Nonnegative Factorization (NF), a new variant of Nonnegative Matrix Factorization (NMF), and proved its NP-hardness for any fixed rank by reduction to the maximum edge biclique problem. The multiplicative updates for NMF can be generalized to NF and provide a new interpretation of the algorithm of Lee and Seung, which explains why it does not perform well in practice. We also developed an heuristic algorithm for the biclique problem whose iterations require $O(|E|)$ operations, based on theoretical results about stationary points of a specific rank-one nonnegative factorization problem \eqref{NF1} and the use of multiplicative updates.

To conclude, we point out that none of the algorithms presented in this paper is guaranteed to converge to a globally optimal solution (and, to the best of our knowledge, such an algorithm has not been proposed yet) ; this is in all likelihood due to the NP-hardness of the NMF and NF problems. Indeed, only convergence to a stationary point has been proved for the algorithms of Sections~\ref{nmf} and~\ref{SecNF}, a property which, while desirable, provides no guarantee about the quality of the solution obtained (for example, nothing prevents these methods from converging to a stationary but rank-deficient solution, which in most cases could be further improved). Finally, no convergence proof for the biclique finding algorithm introduced in Section~\ref{mbfa} is provided (convergence results from the preceding sections no longer hold because of the dynamic updates of parameter $d$) ; however, this heuristic seems to give very satisfactory results in practice.\\

\noindent \textbf{Acknowledgment.} \;
We thank Pr. Paul Van Dooren and Pr. Laurence Wolsey for helpful discussions and advice.

\end{document}